\documentclass{amsart}

\usepackage{amsmath,amsthm,amssymb,pifont,colortbl,amscd, wrapfig,extpfeil}
\usepackage{bigdelim,multirow}
\usepackage{mathrsfs}
\usepackage[all]{xy}
 \usepackage{graphicx}
\usepackage{pgf,tikz}
\usetikzlibrary{decorations}
\usetikzlibrary{shapes,arrows}
\usetikzlibrary{calc,through,intersections,snakes,backgrounds}
\newtheorem{theorem}{Theorem}[subsection]
\newtheorem{definition}[theorem]{Definition}
\newtheorem{lemma}[theorem]{Lemma}
\newtheorem{proposition}[theorem]{Proposition}
\newtheorem{example}[theorem]{Example}
\newtheorem{corollary}[theorem]{Corollary}
\newtheorem{remark}[theorem]{Remark}

\def\mod{\mathop{\mathrm{mod}}\nolimits}

\def\gr{\mathop{\mathrm{gr}}\nolimits}
\def\Hom{\mathop{\mathrm{Hom}}\nolimits}
\def\Ker{\mathop{\mathrm{Ker}}\nolimits}
\def\Aut{\mathop{\mathrm{Aut}}\nolimits}

\def\Der{\mathop{\mathrm{Der}}\nolimits}

\def\comm{\mathop{\mathrm{comm}}\nolimits}
\def\col{\mathop{\mathrm{col}}\nolimits}

\numberwithin{equation}{section}

\begin{document}
\title{Group-like expansions and Invariants of string links}
\author{Hisatoshi Kodani}
\address{Faculty of Mathematics, Kyushu university, 744, Motooka, Nishi-ku, Fukuoka, 819-0395 Japan.}
\email{h-kodani@math.kyushu-u.ac.jp}


\begin{abstract}
In this article, we define and study the total Milnor invariant and the infinitesimal Morita-Milnor homomorphism as punctured disk analogues of the total Johnson map and the infinitesimal Morita homomorphism studied by Kawazumi and Massuyeau in the case of  surface of positive genus with one boundary component.
\end{abstract}

\maketitle

\section*{Introduction}
Special derivations of the free Lie algebra was introduced by Ihara for the sake of studying the structure of the absolute Galois group of rational number field in the context of the Galois action on the algebraic fundamental group of projective line minus three points (\cite{Ih}).

Special derivations are also studied by A. Alekseev, B. Enriquez,  C. Torossian in the context of Kashiwara-Vergne conjecture and they showed that the Kontsevich integral defines the special type of expansion (\cite{AET}). Then, Massuyeau introduced the notion of special expansions which generalize this type of expansions (\cite{MA2}).  
 
 In this article, we firstly define the total Milnor invariant for special derivations of the free Lie algebra. Then, we define the total Milnor invariant for string links in terms of special expansions. This total Milnor invariant is aimed for the punctured disk counter part of the total Johnson map introduced by Kawazumi (\cite{Ka}) for the case of a surface of positive genus with one boundary component. We show that total Milnor invariant for the string links gives the generalization of Milnor invariant of degree $k$ which has diagrammatic interpretation and appear in tree reduction of Kontsevich integral (\cite{HabeM}).
 
Secondly, we define the infinitesimal Morita-Milnor homomorphism as a punctured disk analogue of the infinitesimal Morita homomorohism introduced by Massuyeau for the case of a surface of positive genus with one boundary component. We also show that this infinitesimal Morita-Milnor homomorohism has similar properties as the case of the infinitesimal Morita homomorohism.

To construct the punctured disk analogue of invariant of a surface of positive genus with one boundary component may be useful not only to study string links but also to study the absolute Galois group because they have similar action on the (completed) free group (cf. \cite{KMT}).

The construction of present article is as follows. In \S1, we recall the basics on the pure braid group, string links and the Milnor invariant of degree $k$. In \S2, we recall the diagrammatic interpretation of the third homology group of the free Lie algebra studied by Igusa-Orr and Massuyeau. In \S3, we firstly recall the Malcev completion of a group. Then, we recall the notion of group-like expansions and special expansions introduced by Massuyeau. In \S3, we define the total Milnor invariant for special derivations of free Lie algebra. Then, we define the total Milnor invariant for the monoid of string links in terms of a special expansion and we show some properties of it. \S5, we define the infinitesimal Morita-Milnor homomorohism and we show its relation with the total Milnor invariant defined in \S4.

\subsection*{Notation} For a group $G$, we denote by $\Gamma_m G$ the $m$-th term of the lower central series of $G$ defined by
$$
\Gamma_1G:=G, \ \Gamma_m G:=[\Gamma_{m-1} G, G]\ (m \geqslant 2),
$$
and we set
$$
\gr_{m}G := \Gamma_{m}G/\Gamma_{m+1}G.
$$
The commutator of $a, b \in G$ is defined by $[a,b]:=aba^{-1}b^{-1}$. Let $\mathbb{K}$ be a commutative field of characteristic 0.

\section{The pure braid group and string links}

In this section, we recall the basics of pure braid group and string links. Then, we recall the Milnor invariant of degree $k$. For more details on these subject, see \cite{B} and \cite{HL}.
\subsection{The pure braid group and string links}\label{sec:pure}
Let $n \geqslant 2$ be a fixed integer. Let $PB_n$ be the pure braid group with $n$ strings. It is well known that $PB_n$ is generated by $A_{ij}$ $(1\leqslant i <j \leqslant n)$ subject to the following relation:

\begin{eqnarray*}
A_{rs}A_{ij}A_{rs}^{-1}=\left\{
\begin{aligned}
&A_{ij}&&(\text{if}\ s<i\ \text{or}\ i<r<s<j),\\
&A_{rj}^{-1}A_{ij}A_{rj}&&(\text{if}\ s=i),\\
&A_{rj}^{-1}A_{sj}^{-1}A_{ij}A_{sj}A_{rj}&&(\text{if}\ i=r<s<j),\\
&A_{rj}^{-1}A_{sj}^{-1}A_{rj}A_{sj}A_{ij}A_{sj}^{-1}A_{rj}^{-1}A_{sj}A_{rj}&&(\text{if}\ r<i<s<j).
\end{aligned}
\right.
\end{eqnarray*}
Here, the product $L\cdot L'$ is defined by stacking $L'$ on $L$ for each $L, L' \in PB_n$.
Each generator 

\begin{remark}\label{rm:mapping}{\rm 
The pure braid group $PB_n$ is the pure mapping class group of a $n$-punctured disk $D_n$, i.e. $PB_n$ is the kernel of $\nu : \mathcal{M}(D_n) \rightarrow S_n$ where $\mathcal{M}(D_n)$ is the mapping class group\footnote{Here, the mapping class group of a surface $\Sigma$ is defined as the group of isotopy class of orientation preserving homeomorphisms of $\Sigma$ which fix the boundary pointwise.} of $D_n$ and $S_n$ is the symmetric group of degree $n$.

}
\end{remark}
Note that the fundamental group $\pi_1(D_n)$ may be identified with the free group $F_n$ on $x_1,\ldots, x_n$ where $x_i$ represents a small loop around $i$-th puncture clockwise. Since $P_n$ is the pure mapping class group $\mathcal{PM}(D_n)$ of $n$-punctured disk $D_n$ (Remark \ref{rm:mapping}), $PB_n$ acts on the fundamental group $\pi_1(D_n)=F_n$ naturally. Then, we obtain the homomorphism, called the {\it Artin representation}, 
$$
Art: PB_n \overset{\simeq}{\longrightarrow} \Aut_0(F_n).
$$
Here,  the subgroup $\Aut_0(F_n)$ consists of automorphisms $\varphi$ such that $\varphi(x_i)=y_ix_iy_i$ $(1\leqslant i \leqslant n)$ for some $y_i \in F_n$ and $\varphi(x_1\cdots x_n)=x_1\cdots x_n$.
\begin{remark}\label{longi}{\rm
(1) For each $L \in PB_n$, we have the words $y_i$ by $Art(L)(x_i)=y_ix_iy_i^{-1}$. These words are uniquely determined under the  condition of the coefficient of $x_i$ in $[y_i] \in F_n/\Gamma_2 F_n$  to be 0. Hence, $Art(L)$ is completely determined by $\vec{y}(L)=(y_1(L),\ldots, y_n(L))$ under this condition. These words $y_i$ is coincides with the word of longitude by meridians of the link obtained by closing the pure braid $L$ \footnote{Here, this condition means that we restrict ourselves for the case that pure braids are 0-framed pure braids.}.
 }
\end{remark}
Let $SL_n$ be the monoid of the isotopy classes of string links. As is explained in \cite{HabeM}, the pure braid group $PB_n$ is identified with the group of invertible elements of $SL_n$. By Stalling's theorem, we have the $k$-th Artin representation, for each integer $k \geqslant 1$,
$$
Art_k: SL_n \longrightarrow \Aut_0(F_n/ \Gamma_{k+1}F_n).
$$
Here, $\Aut_0(F_n/ \Gamma_{k+1}F_n)$ denotes the subgroup of $\Aut(F_n/\Gamma_{k+1}F_n)$ consisting of automorphisms which send class of generator $\{x_i\}$ $(1\leqslant i \leqslant n)$ to its conjugate and fix the class $\{x_1\cdots x_n\}$. Note that Remark \ref{longi} (1) also holds for the $k$-th Artin representation of string links, i.e. the $k$-th Artin representation $Art_k(L)$ is completely determined by $\vec{y}{}^{(k)}(L)=(y_1^{(k)}(L), \ldots, y_n^{(k)}(L))$ and $y_i^{(k)}(L)$ is a word of longitudes by meridians.

\subsection{The Milnor invariant of degree $k$} 
Notations being as in \S \ref{sec:pure}, we denote by the kernel of $Art_k$ by $SL_n(k)$, i.e.
\begin{eqnarray*}
&SL_n(k)&:=\Ker(Art_k)\\
&\ &=\{ L \in SL_n \mid Art_k(L)(g)g^{-1} \in \Gamma_{k+1}\}\ (k \geqslant 1).
\end{eqnarray*}
We then have the descending series
$$
SL_n=SL_n(1) \supset \cdots \supset SL_n(k) \supset \cdots
$$
and $\{SL_n(k)\}_{k\geqslant 1}$ is called the {\it Milnor filtration} of $SL_n$. Let $H_{\mathbb{Z}}$ denote the first homology group of $D_n$ with integer coefficients
$$
H_{\mathbb{Z}}:=H_1(D_n, \mathbb{Z})\cong \mathbb{Z}^{\oplus n}
$$
Then, for $k \geqslant 1$, the monoid homomorphism
\begin{equation}\label{eq:Milnor1}
\mu_k: SL_n(k) \longrightarrow  H_{\mathbb{Z}}\otimes \gr_{k}F_n \cong H_{\mathbb{Z}} \otimes \mathfrak{L}_k(H_{\mathbb{Z}})
\end{equation}
is defined by 
\begin{equation}\label{eq:exMilnor}
\mu_k(L)=\sum_{i=1}^n [x_i] \otimes y_i^{(k)}(L)\cong \sum_{i=1}^n X_i \otimes Y_i^{(k)}(L)
\end{equation}
and called the {\it Milnor invariant of degree $k$}. Here, we canonically identify $\gr_{k}F_n$ with the degree $k$-part $\mathfrak{L}_k(H_{\mathbb{Z}})$ of the graded free Lie algebra $\mathfrak{L}(H_{\mathbb{Z}})$ generated by $H_{\mathbb{Z}}$ and we denote by $Y_i^{(k)}(L)$ the corresponding element to $y_i^{(k)}(L)$. We note that the coefficients of $Y_i^{(k)}(L)$ is the first non-vanishing Milnor invariants of $L$ defined by Milnor (\cite{Mi}).

\section{Jacobi diagrams and a free nilpotent Lie algebra}
In this section, we recall the definition of Jacobi diagrams and its relation with a free Lie algebra. Then, we recall the 3rd homology group of a free nilpotent Lie algebra also has the diagrammatic interpretation. For more details on this subject, see \cite{MA}.
\subsection{Jacobi diagrams}\label{Jacobi}
Let $X(\neq \emptyset)$ be a compact oriented 1-manifold. A  {\it Jacobi diagram} is a uni-trivalent finite graph whose trivalent vertices are oriented, i.e., incident edges of each trivalent vertex are cyclically ordered. By convention, vertex orientations are given the trigonometric orientation of the plane. A {\it degree} of a Jacobi diagram is half the number of its univalent vertices and trivalent vertices. A {\it Jacobi diagram on X} is a Jacobi diagram whose univalent vertices are  attached to the interior of $X$ disjointly and each connected component of a Jacobi diagram has at least one univalent vertex in $X$ . We denote by $\mathcal{A}(X)$ the $\mathbb{K}$-vector space spanned by Jacobi diagrams on $X$ subject to AS, IHX and STU relations depicted as the following figures:
\ \\
\begin{center}
\begin{tikzpicture}
\begin{scope} 
\begin{scope}[densely dashed]
\draw (0,0)--(0,-0.45);
\draw (0,0)--(0.424,0.424);
\draw (0,0)--(-0.424,0.424);
\draw (-1,0) node{$=$};
\draw (-0.6,0) node{$-$};
\draw (-1,-0.9) node{AS};
\end{scope}
\begin{scope}[xshift=-1.7cm]
\draw[densely dashed] (0,0)--(0,-0.45);
\draw[densely dashed] (0,0) .. controls (0.3,0.1) and (0.3,0.25)..(-0.424,0.424);
\draw[densely dashed] (0,0) .. controls (-0.3,0.1) and (-0.3,0.25)..(0.424,0.424);
\end{scope}
\end{scope}
\begin{scope}[xshift=-0.5cm]
\begin{scope}[xshift=2cm,densely dashed]
\draw (-0.3, 0.4) --(0.3,0.4);
\draw (0,0.4)--(0,-0.4);
\draw (-0.3,-0.4) --(0.3,-0.4);
\end{scope}
\begin{scope}[xshift=3.5cm, densely dashed]
\draw (-0.3,0.4)--(-0.3,-0.4);
\draw (0.3,0.4)--(0.3,-0.4);
\draw (-0.3,0)--(0.3,0);
\draw (-0.8, 0) node{$-$};
\end{scope}
\begin{scope}[xshift=5cm, densely dashed]
\draw (-0.3,0.4)--(0.3,-0.4);
\draw (0.3,0.4)--(-0.3,-0.4);
\draw (-0.19,-0.26)--(0.19,-0.26);
\draw (-0.8, 0) node{$+$};
\draw (0.8,0) node{$=0$};
\draw (-1.5, -0.9) node{IHX};
\end{scope}
\end{scope}
\begin{scope}[yshift=-2cm]
\begin{scope}
\draw[densely dashed] (0,0)--(0,-0.45);
\draw[densely dashed] (0,0)--(0.424,0.424);
\draw[densely dashed] (0,0)--(-0.424,0.424);
\draw[thick,->] (0.7,-0.45)--(-0.7,-0.45);
\draw (1,0) node{$=$};
\end{scope}
\begin{scope}[xshift=2cm]
\draw[densely dashed] (0.424,-0.45)--(0.424,0.424);
\draw[densely dashed] (-0.424,-0.45)--(-0.424,0.424);
\draw[thick,->] (0.7,-0.45)--(-0.7,-0.45);
\draw (1,0) node{$-$};
\draw (0, -0.9) node{STU};
\end{scope}
\begin{scope}[xshift=4cm]
\draw[densely dashed] (0.424,-0.45)--(-0.424,0.424);
\draw[densely dashed] (-0.424,-0.45)--(0.424,0.424);
\draw[thick,->] (0.7,-0.45)--(-0.7,-0.45);
\end{scope}
\end{scope}
\end{tikzpicture}
\end{center}
Conventionally, the components of $X$ are depicted as solid line whereas the Jacobi diagrams are depicted as dashed one. The space $\mathcal{A}(X)$ is graded by the degree of Jacobi diagrams and we denote by same $\mathcal{A}(X)$ its degree completion , i.e., $\mathcal{A}(X)=\prod_{d\geqslant 0}\mathcal{A}_d(X)$ where $\mathcal{A}_d(X)$ is degree $d$ part of $\mathcal{A}(X)$. For finite set $S$, we also set $\mathcal{A}(\uparrow_S)=\mathcal{A}(\amalg_{s \in S} \uparrow_s)$ where $\uparrow_s$ denote the copy of (oriented) unit interval $[0,1]$ for each element $s \in S$. By staking, $\mathcal{A}(\uparrow_S)$ is endowed with structure of algebra. Moreover, for all $X$ the vector space $\mathcal{A}(X)$ has a structure of Hopf algebra by a natural cocommutative comultiplication $\Delta$ defined as in \cite{BN}. Hence, we can define primitive elements and  group-like elements as follows: An element $\xi \in \mathcal{A}(\uparrow S)$ is called a {\it primitive element} if  $\Delta(\xi)=\xi\otimes 1 +1\otimes \xi$ and an element $\xi \in \mathcal{A}(\uparrow_S)$ is called a {\it group-like element} if $\Delta(\xi)=\xi\otimes \xi$ and $\epsilon(\xi)=1$. Here, $\epsilon: \mathcal{A}(\uparrow_S) \rightarrow \mathbb{K}$ is given by $\epsilon(\xi)=1$ $($if $\xi$ has no dashed component$)$ and $\epsilon(\xi)=0$ $($otherwise$)$ for $\xi \in \mathcal{A}(\uparrow_S)$. 

Let $S (\neq \emptyset)$ be a finite set. Let $\mathcal{B}(S)$ denote the complete graded vector space spanned by Jacobi diagrams whose univalent vertices are labelled by elements of the set S and each connected component has at least one univalent vertex subject to AS and IHX relations. Here the completion is given by the degree of Jacobi diagrams. We note that $\mathcal{B}(S)$ has an algebraic structure given by disjoint union. There is a Poincar\'e-Birkhoff-Witt type isomorphism $\chi : \mathcal{B}(S) \rightarrow \mathcal{A}(\uparrow_S)$ which sends a Jacobi diagram to the average of all the ways of putting its univalent vertices labelled by $s$ to the interval $\uparrow_s$ for $s \in S$. Note that the isomorphism $\chi$ is not an algebra morphism. In particular, if $S$ is a finite dimensional vector space $H$, then $\mathcal{B}(H)$ is called the graded vector space of {\it $H$-colored} Jacobi diagrams. The vector space $\mathcal{B}(H)$ is also subject to multilinearity relation as the following picture:
\[
\begin{tikzpicture}
\begin{scope}[xshift=7cm, densely dashed]
\begin{scope} 
\draw (0,0)--(0,0.45);
\draw (0,0)--(-0.424, -0.424);
\draw (0,0) -- (0.424, -0.424);
\draw (0.8,0) node{$=$};
\draw (0, 0.7) node{$v_1 + v_2$};
\end{scope}
\begin{scope}[xshift=1.5cm, densely dashed]
\draw (0,0)--(0,0.45);
\draw (0,0)--(-0.424, -0.424);
\draw (0,0) -- (0.424, -0.424);
\draw (0,0.7) node{$v_1$};
\draw (0,-0.9) node{multilinearity};
\end{scope}
\begin{scope}[xshift=2.8cm, densely dashed]
\draw (0,0)--(0,0.45);
\draw (0,0)--(-0.424, -0.424);
\draw (0,0) -- (0.424, -0.424);
\draw (0,0.7) node{$v_2$};
\draw (-0.6, 0) node{$+$};
\end{scope}
\end{scope}
\end{tikzpicture}
\]
The element in $H$ assigned to a vertex $v$ of a diagram in $\mathcal{B}(H)$ is denoted by $\mathrm{col}(v)$.

Let $\mathcal{A}^t(\uparrow_S)$ be the graded quotient of $\mathcal{A}(\uparrow_S)$ by the subspace spanned by Jacobi diagrams containing a non-simply connected dashed component. Then, $\mathcal{B}^t(S):=\chi^{-1}(\mathcal{A}^t(\uparrow_S))$ is the commutative polynomial algebra of $\mathcal{C}^t(S)$. Here, we denote by $\mathcal{C}^t(S)$ the space of connected tree Jacobi diagrams labelled by elements of $S$.

Let $H$ be a vector space. We recall that, for any connected tree Jacobi diagram $A_r$ all of whose univalent vertices are $H$-colored except for $r$, we can assign the element $\comm(A_r)$ in the free lie algebra generated by $H$ as the following manner: 
For any univalent vertex $v \neq r$, label the edge incident to $v$ of $A_r$ by color $\col(v)$ of $v$. Next, we assign the label $[a,b]$ to any edge meeting $a$-labelled edge and $b$-labelled edge at a trivalent vertex following the cyclically orientation. Finally, one obtain the label associated the the edge incident to $r$. This is the desired element $\comm(A_r)$.
\begin{example}{\rm
For the following degree 2 $H$-colored Jacobi diagram , its associated element of Lie algebra is given by
\[
\begin{tikzpicture}[thick]
\begin{scope}[xshift=1cm]
\node at (-3,1) {$\mathrm{comm}\Biggl($};
\node at (3,1) {$\Biggr)=[[[v_1,v_2],[v_3,v_4]],v_5].$};
\end{scope}
\begin{scope}[scale=0.35]
\coordinate (v_1) at (-3,5);
\coordinate (v_2) at (-1,5);
\coordinate (v_3) at (0,4);
\coordinate (v_4) at (2,4);
\coordinate (v_6) at (5,5);
\coordinate (r_0) at (1,0);
\coordinate (r_1) at (-2,4);
\coordinate (r_2) at (1,1);
\coordinate (r_3) at (1,3);
\coordinate (r_4) at (4,4);
\coordinate (r_5) at (0,2);
\draw[densely dashed] (r_0)--(r_2)--(r_4)--(v_6);
\draw[densely dashed] (r_2)--(r_5)--(r_1)--(v_1);
\draw[densely dashed] (r_5)--(r_3)--(v_4);
\draw[densely dashed] (r_3)--(v_3);
\draw[densely dashed] (r_1)--(v_2);
\node[anchor=south] at (v_1) {$v_1$};
\node[anchor=south] at (v_2) {$v_2$};
\node[anchor=south] at (v_3) {$v_3$};
\node[anchor=south] at (v_4) {$v_4$};
\node[anchor=south] at (v_6) {$v_5$};
\node[anchor=north] at (r_0) {$r$};
\end{scope}
\end{tikzpicture}
\]
}
\end{example}

\subsection{The diagrammatic description of the 3rd homology group of a free nilpotent Lie algebra}
In this section, we recall the diagrammatic interpretation of the 3rd homology group of a free nilpotent Lie algebra in terms of the fission map introduced by Massuyeau in \cite{MA}.

Let $H$ donote a finite dimensional vector space. Let $\mathfrak{L}(H)$ be the free Lie algebra generated by $H$.  The free Lie algebra $\mathfrak{L}(H)$ has a grading induced by the commutator length, i.e., $\mathfrak{L}(H):=\bigoplus_{k\geqslant1}\mathfrak{L}_k(H)$.
For simplicity, we often denote  $\mathfrak{L}(H)$ by $\mathfrak{L}$. Since $\mathfrak{L}_{\geqslant k+1}$ coincides with $\Gamma_{k+1}\mathfrak{L}$, the Lie algebra $\mathfrak{L}/\mathfrak{L}_{k+1}$ is the free nilpotent Lie algebra generated by $H$ of nilpotency class $k$. The {\it Koszul complex} of $\mathfrak{L}$ with trivial coefficients is the chain complex $(\Lambda^{\bullet}( \mathfrak{L}/\mathfrak{L}_{k+1}), \partial)$ with  boundary operator $\partial_n: \Lambda^n \mathfrak{L} \rightarrow \Lambda^{n-1}\mathfrak{L}$ given by
$$
\partial_n (h_1\wedge \cdots \wedge h_n)=\sum_{i<j}(-1)^{i+j}\cdot [h_i,h_j]\wedge h_1\wedge \cdots\widehat{h}_i\cdots \widehat{h}_j \cdots \wedge h_n.
$$
Since the grading of $\mathfrak{L}$ induces a grading of the Koszul complex $\Lambda^{\bullet}(\mathfrak{L}/\mathfrak{L}_{\geqslant k+1})$, its homology groups are endowed with the structure of graded vector space. 

Next, we consider the central extension of graded Lie algebras
$$
0\rightarrow \mathfrak{L}_k \rightarrow \mathfrak{L}/\mathfrak{L}_{\geqslant k+1} \rightarrow \mathfrak{L}/\mathfrak{L}_{\geqslant k} \rightarrow1
$$
Let $\{ E_{p,q}^r\}$ be the Hochschild-Serre spectral sequence associated to the above central extension which gives
\begin{equation}\label{spectral}
E_{p,q}^r \overset{r \rightarrow \infty}{\longrightarrow}H_{p+q}(\mathfrak{L}/\mathfrak{L}_{\geqslant k+1}) \quad \mathrm{and} \quad E_{p,q}^2 \simeq H_p(\mathfrak{L}/\mathfrak{L}_{\geqslant k})\otimes \Lambda^q\mathfrak{L}_k.
\end{equation}
Let $D_k(H)$ be the kernel of the Lie bracket $[-,-]: H\otimes \mathfrak{L}_{k}(H) \rightarrow \mathfrak{L}_{k+1}(H)$ given by $X\otimes Z \mapsto [X,Z]$ for $X \in H$, $Z \in \mathfrak{L}_{k}$ . Then, we have the following theorem given by Igusa and Orr.

\begin{theorem}\label{thm:IgusaOrr}$($\cite{IO}$).$
There is an isomorphism of graded vector spaces
$$
\mathrm{IO}:H_3(\mathfrak{L}/\mathfrak{L}_{\geqslant k}) \overset{\simeq}{\longrightarrow}\bigoplus_{l=k}^{2k-2}D_l(H)
$$
such that we have the following commutative  diagram
$$
\xymatrix{
&H_{3}(\mathfrak{L}/\mathfrak{L}_{\geqslant m}) \ar[d] \ar[r]^{\simeq}&\bigoplus_{l=m}^{2m-2}D_l(H) \ar[d]\\
&H_{3}(\mathfrak{L}/\mathfrak{L}_{\geqslant k}) \ar[r]^{\simeq} &\bigoplus_{l=k}^{2k-2}D_l(H)}
$$
for all $m\geqslant k$ and such that the composition of the canonical projection and the isomorphism $\mathrm{IO}$
$$
H_3(\mathfrak{L}/\mathfrak{L}_{\geqslant k}) \xtwoheadrightarrow[]{} H_3(\mathfrak{L}/\mathfrak{L}_{\geqslant k})_{k+1} \xrightarrow[\simeq]{\mathrm{IO}_{k+1}} D_{k}(H) \subset H\otimes \mathfrak{L}_k
$$
coincides with the differencial $d_{3,0}^{2}:E_{3,0}^{2} \rightarrow E_{1,1}^{2}$ of the spectral sequence (\ref{spectral}). Here $H_3(\mathfrak{L}/\mathfrak{L}_{\geqslant k})_{k+1}$ denotes the degree $k+1$ part of $H_3(\mathfrak{L}/\mathfrak{L}_{\geqslant k+1})$.
\end{theorem}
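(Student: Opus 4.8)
The natural plan is an induction on $k$ carried out with the Hochschild--Serre spectral sequence $(\ref{spectral})$ of the central extension
\[
0\longrightarrow \mathfrak{L}_k \longrightarrow \mathfrak{L}/\mathfrak{L}_{\geqslant k+1} \longrightarrow \mathfrak{L}/\mathfrak{L}_{\geqslant k} \longrightarrow 0,
\]
passing from level $k$ to level $k+1$. The inputs at each step are the Hopf-formula identifications $H_1(\mathfrak{L}/\mathfrak{L}_{\geqslant k})\cong H$ and $H_2(\mathfrak{L}/\mathfrak{L}_{\geqslant k})\cong \mathfrak{L}_k(H)$ (the latter because $\mathfrak{L}/\mathfrak{L}_{\geqslant k+1}=\mathfrak{L}/[\mathfrak{L},\mathfrak{L}_{\geqslant k}]$, so that $\mathfrak{L}_{\geqslant k}/[\mathfrak{L},\mathfrak{L}_{\geqslant k}]=\mathfrak{L}_k$), together with the inductive description of $H_3(\mathfrak{L}/\mathfrak{L}_{\geqslant k})$. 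The base case $k=2$ is the abelian computation $H_3(\mathfrak{L}/\mathfrak{L}_{\geqslant 2})=\Lambda^3 H$; here $D_2(H)\cong\Lambda^3 H$ because the Jacobi relations span the kernel of $H\otimes\Lambda^2 H\to\mathfrak{L}_3(H)$, and one \emph{defines} $\mathrm{IO}$ to be the transgression $d_{3,0}^{2}\colon\Lambda^3 H\to H\otimes\mathfrak{L}_2$, $a\wedge b\wedge c\mapsto a\otimes[b,c]-b\otimes[a,c]+c\otimes[a,b]$, which realises the stated isomorphism onto $D_2(H)$.

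For the inductive step I would first record that the displayed extension is the universal central extension of $\mathfrak{L}/\mathfrak{L}_{\geqslant k}$ by its own $H_2$, so that (over the field $\mathbb{K}$) its class in $H^{2}(\mathfrak{L}/\mathfrak{L}_{\geqslant k};\mathfrak{L}_k)\cong\mathrm{Hom}(\mathfrak{L}_k,\mathfrak{L}_k)$ is the identity; hence $d_{2,0}^{2}\colon H_2(\mathfrak{L}/\mathfrak{L}_{\geqslant k})=\mathfrak{L}_k\to\mathfrak{L}_k=E^{2}_{0,1}$ is the identity and every $d^{2}$ of $(\ref{spectral})$ is the cap product with this class followed by the multiplication $\Lambda^{q}\mathfrak{L}_k\otimes\mathfrak{L}_k\to\Lambda^{q+1}\mathfrak{L}_k$. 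Then I would list the four terms in total degree $3$: $E^{2}_{3,0}=H_3(\mathfrak{L}/\mathfrak{L}_{\geqslant k})$ (known by induction, internal degrees $k+1,\dots,2k-1$), $E^{2}_{2,1}=\mathfrak{L}_k\otimes\mathfrak{L}_k$ (internal degree $2k$), $E^{2}_{1,2}=H\otimes\Lambda^{2}\mathfrak{L}_k$ (internal degree $2k+1$), and $E^{2}_{0,3}=\Lambda^{3}\mathfrak{L}_k$ (internal degree $3k$). For $k\geqslant 2$ these occupy pairwise disjoint ranges of internal degree, so the filtration on $H_3(\mathfrak{L}/\mathfrak{L}_{\geqslant k+1})$ splits as a graded vector space and it remains only to compute $E^{\infty}_{3,0},E^{\infty}_{2,1},E^{\infty}_{1,2}$ and check $E^{\infty}_{0,3}=0$. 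The last is clear, as $d^{2}_{2,2}\colon\mathfrak{L}_k\otimes\Lambda^{2}\mathfrak{L}_k\to\Lambda^{3}\mathfrak{L}_k$, $z\otimes\omega\mapsto z\wedge\omega$, is onto. For the $E_{3,0}$ column, the inductive form of the last clause says precisely that, under $\mathrm{IO}$, $d_{3,0}^{2}$ is the composite $\bigoplus_{l=k}^{2k-2}D_l(H)\twoheadrightarrow D_k(H)\hookrightarrow H\otimes\mathfrak{L}_k=E^{2}_{1,1}$, which is injective in internal degree $k+1$ and vanishes in higher degrees (the target lives in degree $k+1$), while $d_{3,0}^{3}$ vanishes for the same reason; hence $E^{\infty}_{3,0}=\bigoplus_{l=k+1}^{2k-2}D_l(H)$, and this also yields the commutative square comparing levels $k+1$ and $k$, whose vertical maps are the projections onto common summands.

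The core is then the identification $E^{\infty}_{2,1}\cong D_{2k-1}(H)$ and $E^{\infty}_{1,2}\cong D_{2k}(H)$. From the cap-product description, $d^{2}_{2,1}\colon\mathfrak{L}_k\otimes\mathfrak{L}_k\to\Lambda^{2}\mathfrak{L}_k$ is the antisymmetrisation, with kernel $\mathrm{Sym}^{2}\mathfrak{L}_k$, and $d^{2}_{3,1}\colon H_3(\mathfrak{L}/\mathfrak{L}_{\geqslant k})\otimes\mathfrak{L}_k\to H\otimes\Lambda^{2}\mathfrak{L}_k$ is $\xi\otimes z\mapsto d^{2}_{3,0}(\xi)\wedge z$ with the wedge taken in the $\mathfrak{L}_k$-slot, so that its image in internal degree $2k+1$ is $\{\sum_i h_i\otimes(w_i\wedge z):\sum_i h_i\otimes w_i\in D_k(H),\ z\in\mathfrak{L}_k\}$; one is thereby reduced to identifying $\mathrm{Sym}^{2}\mathfrak{L}_k/\mathrm{im}(d^{2}_{4,0})$ and $(H\otimes\Lambda^{2}\mathfrak{L}_k)/\mathrm{im}(d^{2}_{3,1})$, the latter corrected by an incoming $d^{3}_{4,0}$, with $D_{2k-1}(H)=\ker(H\otimes\mathfrak{L}_{2k-1}\to\mathfrak{L}_{2k})$ and $D_{2k}(H)=\ker(H\otimes\mathfrak{L}_{2k}\to\mathfrak{L}_{2k+1})$ by way of the natural bracket maps and the presentations of $\mathfrak{L}_{2k-1},\mathfrak{L}_{2k}$ as quotients of $H\otimes\mathfrak{L}_k$ and $\mathfrak{L}_k\otimes\mathfrak{L}_k$ modulo Jacobi relations. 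The main obstacle, I expect, is precisely this bracket bookkeeping together with the control it demands over $d^{2}_{4,0}$ and $d^{3}_{4,0}$ — equivalently over the pertinent part of $H_4(\mathfrak{L}/\mathfrak{L}_{\geqslant k})$ — and the check that the surviving transgression of the \emph{next} extension, restricted to the freshly created bottom summand $D_{k+1}(H)\subset H_3(\mathfrak{L}/\mathfrak{L}_{\geqslant k+1})$, is again the inclusion into $H\otimes\mathfrak{L}_{k+1}$; this self-consistency closes the induction and supplies the last clause of the theorem, and it is essentially the substance of Igusa and Orr's argument. Compatibility with the projections $H_3(\mathfrak{L}/\mathfrak{L}_{\geqslant m})\to H_3(\mathfrak{L}/\mathfrak{L}_{\geqslant k})$ for $m\geqslant k$ is then automatic, since $\mathrm{IO}$ is assembled from the internal-degree splitting together with the comparison maps of the spectral sequences $(\ref{spectral})$ along the tower $\cdots\to\mathfrak{L}/\mathfrak{L}_{\geqslant k+1}\to\mathfrak{L}/\mathfrak{L}_{\geqslant k}\to\cdots$, all of which are natural.
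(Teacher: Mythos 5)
The paper offers no proof of this statement: it is quoted from Igusa--Orr \cite{IO} (in the Lie-algebra form used by Massuyeau \cite{MA}), so there is no internal argument to compare yours against. Judged on its own terms, your sketch sets up a sensible inductive framework and the routine bookkeeping is correct: the identifications $H_1(\mathfrak{L}/\mathfrak{L}_{\geqslant k})\cong H$ and $H_2(\mathfrak{L}/\mathfrak{L}_{\geqslant k})\cong\mathfrak{L}_k$, the base case $H_3(H)=\Lambda^3H\cong D_2(H)$, the internal-degree separation of $E^2_{3,0}$, $E^2_{2,1}$, $E^2_{1,2}$, $E^2_{0,3}$, the vanishing of $E^{\infty}_{0,3}$, and the computation $E^{\infty}_{3,0}=\bigoplus_{l=k+1}^{2k-2}D_l(H)$ from the inductive form of the last clause are all sound. (One terminological slip: the extension is not a \emph{universal} central extension, since $\mathfrak{L}/\mathfrak{L}_{\geqslant k}$ is not perfect; what you actually need --- that $d^2_{2,0}\colon H_2(\mathfrak{L}/\mathfrak{L}_{\geqslant k})\to\mathfrak{L}_k$ is an isomorphism --- follows from the five-term exact sequence and is correct.)

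The genuine gap is exactly where you flag it, and it is not closable by ``bracket bookkeeping'' within your induction. To obtain $E^{\infty}_{2,1}\cong D_{2k-1}(H)$ and $E^{\infty}_{1,2}\cong D_{2k}(H)$ you must compute $\mathrm{Sym}^2\mathfrak{L}_k/\mathrm{Im}(d^2_{4,0})$ and $(H\otimes\Lambda^2\mathfrak{L}_k)/(\mathrm{Im}(d^2_{3,1})+\mathrm{Im}(d^3_{4,0}))$, and both quotients depend on the differentials emanating from $E_{4,0}$, that is, on $H_4(\mathfrak{L}/\mathfrak{L}_{\geqslant k})$ --- information an induction carrying only $H_{\leqslant 3}$ does not supply. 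The correction terms are genuinely nonzero: already for $\dim H=2$ and $k=5$ one has $\dim\mathrm{Sym}^2\mathfrak{L}_5=21$ while $\dim D_9(H)=2\dim\mathfrak{L}_9-\dim\mathfrak{L}_{10}=13$, so $d^2_{4,0}$ cannot be ignored. Supplying this input is precisely the substance of Igusa and Orr's theorem; their argument does not close the loop by spectral-sequence formalism alone but injects independent information (explicit cycles realizing all of $\bigoplus_l D_l(H)$, a rank count, and the vanishing of the stabilization maps as in Lemma \ref{lem:pre}). The same remark applies to your final ``self-consistency check'' that the next transgression restricts to the inclusion $D_{k+1}(H)\subset H\otimes\mathfrak{L}_{k+1}$ on the new bottom summand, which is asserted rather than proved. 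As written, your proposal is an accurate road map whose destination is cited rather than reached.
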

Finally, we recall the diagrammatic description of $H_3(\mathfrak{L}/\mathfrak{L}_{\geqslant k})$. For this purpose, let us recall  the space $D_k(H)$ has a description in terms of tree Jacobi diagram. Let $T$ be a connected tree $H$-colored Jacobi diagram in $\mathcal{C}^{t}_k(H)$. As explained in section \ref{Jacobi},  for each univalent vertex $v_0$ with color $\mathrm{col}(v_0)$, we can assign the element $\mathrm{comm}(T_{v_0})$ in $\mathfrak{L}_{k}$ ,where $T_{v_0}$ denotes the tree diagram rooted at $v_0$. By taking a sum over all univalent vertices $v$ of $T$, we have a linear map
$$
\eta_k(T):=\sum_v\mathrm{col}(v) \otimes \mathrm{comm}(T_v).
$$
This linear map $\eta_k$ gives an isomorphism
$$
\eta_k:\mathcal{C}^t_k(H) \overset{\simeq}{\longrightarrow} D_{k}(H) \subset H\otimes \mathfrak{L}_{k}(H).
$$

For the diagrammatic discription of $H_3(\mathfrak{L}/\mathfrak{L}_{\geqslant k})$, we need to define the {\it fission map} of tree diagrams given in the following manner. Let $T$ be a degree $k$ connected tree $H$-colored Jacobi diagram. For each trivalent vertex $r$, we may consider $T$ as the union of three tree diagrams rooted at $r$, which we denote by $T_r^{(1)}, T_r^{(2)}$ and $T_r^{(3)}$. Here the numbering $1, 2, 3$ is defined according to  the cyclically ordering of $r$. By the method in \S2.1, we can associate the element $\mathrm{comm}(T_r^{(i)})$ of $\mathfrak{L}$ to each tree $T_r^{(i)}$ for $i=1,2,3$. Then, the fission map $\phi : \mathcal{C}^t(H) \rightarrow \Lambda^3 \mathfrak{L}$ is defined by
$$
\phi(T):=\sum_r \mathrm{comm}(T_r^{(3)}) \wedge \mathrm{comm}(T_r^{(2)}) \wedge \mathrm{comm}(T_r^{(1)})
$$
where the sum is taken over all trivalent vertices $r$ of $T$. Moreover, it turns out that its image under the boundary operator $\partial_3$ is given by
\begin{equation}
\partial_3(\phi(T))=\sum_{v} \col (v) \wedge \comm(T_v) \in \Lambda^2 \mathfrak{L}
\end{equation}
where the sum is taken over all univalent vertices $v$ of $T$.

By the fission map and theorem \ref{thm:IgusaOrr}, we have the following thereom which gives the description of $H_3(\mathfrak{L}/\mathfrak{L}_{\geqslant k})$ in terms of tree diagrams. For the proof, see \cite{MA}

\begin{theorem}\label{thm:Phi}$($\cite{MA}$)$
The fission of tree diagrams defines a linear isomorphism
$$
\Phi: \bigoplus_{l=k}^{2k-2}\mathcal{C}_l^t(H) \overset{\simeq}{\longrightarrow}H_3(\mathfrak{L}/\mathfrak{L}_{\geqslant k})
$$
which shifts the degree by $+1$.\footnote{Here, if we define a degree as the number of trivalent vertices, then $\Phi$ shifts the degree +2 as in \cite{MA}}
\end{theorem}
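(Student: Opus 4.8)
The plan is to show that, under the Igusa--Orr isomorphism $\mathrm{IO}$ of Theorem \ref{thm:IgusaOrr} and the diagrammatic isomorphism $\eta=\bigoplus_{l=k}^{2k-2}\eta_l$ identifying $\mathcal{C}^t_l(H)$ with $D_l(H)$, the map $\Phi$ satisfies $\mathrm{IO}\circ\Phi=\pm\,\eta$; since $\mathrm{IO}$ and $\eta$ are isomorphisms, $\Phi=\mathrm{IO}^{-1}\circ(\pm\eta)$ is then one as well.

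First I would check that $\Phi$ is well defined and raises degree by $1$. The fission map descends to $\mathcal{C}^t(H)$ because $\comm$ turns the AS relation into the antisymmetry of the bracket and the IHX relation into the Jacobi identity. For $T\in\mathcal{C}^t_l(H)$ with $k\leqslant l\leqslant 2k-2$, the image of $\phi(T)$ in $\Lambda^3(\mathfrak{L}/\mathfrak{L}_{\geqslant k})$ is a cycle: by naturality of the Koszul differential its boundary is the reduction of $\partial_3(\phi(T))=\sum_v\col(v)\wedge\comm(T_v)$, and $\comm(T_v)\in\mathfrak{L}_l$ vanishes modulo $\mathfrak{L}_{\geqslant k}$ since $l\geqslant k$. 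This defines $\Phi(T)\in H_3(\mathfrak{L}/\mathfrak{L}_{\geqslant k})$. A degree $l$ tree has $l+1$ univalent vertices, so in each summand $\comm(T_r^{(3)})\wedge\comm(T_r^{(2)})\wedge\comm(T_r^{(1)})$ of $\phi(T)$ the three commutator lengths add up to $l+1$; hence $\Phi$ shifts degree by $+1$, and since $H_3(\mathfrak{L}/\mathfrak{L}_{\geqslant k})$ is concentrated in degrees $k+1,\dots,2k-1$ with its degree $l+1$ part equal to $D_l(H)$ under $\mathrm{IO}$, source and target agree degreewise.

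The heart of the matter is to compute $\mathrm{IO}(\Phi(T))$ for $T\in\mathcal{C}^t_l(H)$. The same boundary computation with $k$ replaced by $l$ shows that $\phi(T)$ reduced modulo $\mathfrak{L}_{\geqslant l}$ is already a cycle, defining a class $z_T\in H_3(\mathfrak{L}/\mathfrak{L}_{\geqslant l})$ whose image under the projection $H_3(\mathfrak{L}/\mathfrak{L}_{\geqslant l})\to H_3(\mathfrak{L}/\mathfrak{L}_{\geqslant k})$ is $\Phi(T)$. The class $z_T$ is homogeneous of degree $l+1$, which is the \emph{lowest} degree occurring in $H_3(\mathfrak{L}/\mathfrak{L}_{\geqslant l})$, so the projection onto that lowest graded piece is the identity on $z_T$; hence, by the last assertion of Theorem \ref{thm:IgusaOrr} applied with $k$ replaced by $l$, $\mathrm{IO}_l(z_T)\in D_l(H)$ equals $d^2_{3,0}(z_T)$, the differential of the Hochschild--Serre spectral sequence for $0\to\mathfrak{L}_l\to\mathfrak{L}/\mathfrak{L}_{\geqslant l+1}\to\mathfrak{L}/\mathfrak{L}_{\geqslant l}\to0$. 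To evaluate this I would take $\phi(T)$ reduced modulo $\mathfrak{L}_{\geqslant l+1}$ as an explicit lift of the cycle representing $z_T$ to $\Lambda^3(\mathfrak{L}/\mathfrak{L}_{\geqslant l+1})$; its boundary is $\sum_v\col(v)\wedge\comm(T_v)\in\mathfrak{L}_1\wedge\mathfrak{L}_l$, which represents $\sum_v\col(v)\otimes\comm(T_v)=\eta_l(T)$ in $E^2_{1,1}=H_1(\mathfrak{L}/\mathfrak{L}_{\geqslant l})\otimes\mathfrak{L}_l\cong H\otimes\mathfrak{L}_l$. Thus $\mathrm{IO}_l(z_T)=\pm\eta_l(T)$, and feeding this into the commutative square of Theorem \ref{thm:IgusaOrr} with $m=l$ (whose right vertical map carries the summand $D_l(H)$ of $\bigoplus_{j=l}^{2l-2}D_j(H)$ identically onto the summand $D_l(H)$ of $\bigoplus_{j=k}^{2k-2}D_j(H)$, using $k\leqslant l\leqslant 2k-2$) gives $\mathrm{IO}(\Phi(T))=\pm\eta_l(T)$. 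Ranging over $l$ and over all diagrams yields $\mathrm{IO}\circ\Phi=\pm\eta$, which completes the argument.

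The step I expect to be the main obstacle is the spectral-sequence computation of $d^2_{3,0}(z_T)$: one must verify that the reduction of $\phi(T)$ modulo $\mathfrak{L}_{\geqslant l+1}$ is a legitimate lift, that applying $\partial_3$ lands in the subcomplex computing $E_{1,1}$, and that under the identification $\mathfrak{L}_1\wedge\mathfrak{L}_l\cong H\otimes\mathfrak{L}_l$ one recovers precisely $\eta_l(T)$, with a consistent sign governed by the ordering conventions in $\Lambda^3$ and by the cyclic order at trivalent vertices built into $\phi$. A subsidiary point requiring care is the vanishing argument in the well-definedness: terms of $\phi(T)$ having a tree factor of commutator length $\geqslant k$ must drop out modulo $\mathfrak{L}_{\geqslant k}$, so that $\Phi(T)$ is genuinely a cycle and not merely a chain.
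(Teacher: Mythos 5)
The paper itself does not prove this theorem; it quotes it from \cite{MA} with the remark ``for the proof, see \cite{MA}''. Your strategy --- proving $\mathrm{IO}\circ\Phi=\pm\,\eta$ by identifying the lowest-degree component of $\mathrm{IO}$ with the differential $d^2_{3,0}$ of the Hochschild--Serre spectral sequence and then computing that differential on the explicit lift of $\phi(T)$ to $\Lambda^3(\mathfrak{L}/\mathfrak{L}_{\geqslant l+1})$ --- is exactly the argument of \cite{MA}. The degree bookkeeping (a degree $l$ tree has $l+1$ univalent vertices), the observation that $z_T$ is homogeneous of the lowest degree $l+1$ so that $\mathrm{IO}(z_T)=\mathrm{IO}_{l+1}(z_T)=d^2_{3,0}(z_T)=\pm\eta_l(T)$, and the use of the compatibility square of Theorem \ref{thm:IgusaOrr} with $m=l$ are all correct, and the conclusion $\Phi=\mathrm{IO}^{-1}\circ(\pm\eta)$ does follow.

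The one step that fails as written is the very first one: the fission map $\phi$ does \emph{not} descend to $\mathcal{C}^t(H)$ at the chain level, so ``$\comm$ turns IHX into the Jacobi identity'' is not a sufficient justification. The AS check is fine (reversing the cyclic order at a trivalent vertex $s$ flips the sign of one $\comm$-factor in each summand with $r\neq s$, and transposes two factors of the wedge in the summand $r=s$). But in an IHX relation the three diagrams have different internal vertices: the Jacobi identity only cancels the contributions of trivalent vertices lying inside the four branches $A,B,C,D$, while the six contributions of the two vertices adjacent to the modified edge survive and sum to $\pm\,\partial_4\bigl(\comm(A)\wedge\comm(B)\wedge\comm(C)\wedge\comm(D)\bigr)$, a nonzero element of $\Lambda^3\mathfrak{L}$. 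Thus $\phi(\mathrm{IHX})$ is a boundary rather than zero, and $\Phi$ is well defined only after passing to $H_3(\mathfrak{L}/\mathfrak{L}_{\geqslant k})$ --- which is all you need, but the justification must be this boundary computation, not the Jacobi identity alone. (This also does not disturb the later $d^2$ computation, since a class in $E^2_{3,0}$ may be represented by any chain-level lift.) With that repair, and the sign bookkeeping you already flag, the proof goes through.
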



\section{Special expansions}
In this section, we review the Malcev completion and Malcev Lie algebra of a group. Then, we recall the group-like expansion  and special expansions intoroduced by Massuyeau in \cite{MA} and \cite{MA2}. For more information on the Malcev completion and the Malcev Lie algebra and group-like expansion, consult \cite{Q} and \cite{MA} respectively.
\subsection{The Malcev completion and the Malcev Lie algebra of a group}

Let $G$ be a group. Let $\mathbb{K}[G]$ be the group ring of $G$ over $\mathbb{K}$ and $\epsilon :\mathbb{K}[G] \rightarrow \mathbb{K}$ be the augmentation map. Let $I:=\Ker \epsilon$ be the augmentation ideal of $\mathbb{K}[G]$. The augmentation ideal $I$ is generated by $(g-1)$ $(g \in G)$ as a $\mathbb{K}$-module. We define the $I$-adic completion of $\mathbb{K}[G]$ by
$$
\widehat{\mathbb{K}[G]}:=\varprojlim_{k}\mathbb{K}[G]/I^k
$$
The $I$-adic completion $\widehat{\mathbb{K}[G]}$ is equipped with the filtaration given by $\widehat{I_{j}}:=\varprojlim_{k\geqslant j}I^j/I^k$  for all $j \geqslant 0.$
We define the coproduct $\Delta : \mathbb{K}[G] \rightarrow \mathbb{K}[G] \otimes \mathbb{K}[G]$ by $\Delta(g)=g \otimes g$ $(g \in G)$. Since the coproduct $\Delta$ is continuous with respect to the $I$-adic topology, $\Delta$ induces the coproduct $\widehat{\Delta}: \widehat{\mathbb{K}[G]} \rightarrow \widehat{\mathbb{K}[G]} \widehat{\otimes} \widehat{\mathbb{K}[G]}$ on $\widehat{\mathbb{K}[G]}$.
Hence, the $I$-adic completion $\widehat{\mathbb{K}[G]}$ is endowed with a structure of complete Hopf algebra.

The {\it Malcev completion} $\mathsf{M}(G)$ of $G$ is the subgroup of $\widehat{\mathbb{K}[G]}$ consisting of group-like elements, i.e.,
$$
\mathsf{M}(G):=\left\{x \in \widehat{\mathbb{K}[G]} \mid \widehat{\Delta}(x)=x \widehat{\otimes} x, \epsilon(x)=1\right\}.
$$
By the filtration of $\widehat{\mathbb{K}[G]}$, $\mathsf{M}(G)$ is quipped with the filtration $\widehat{\Gamma}_j \mathsf{M}(G):=\mathsf{M}(G) \cap \left( 1 + \widehat{I^j}\right)$ for all $j \geqslant 1.$
The {\it Malcev Lie algebra} of  $G$ is the Lie algebra of primitive elements of $\widehat{\mathbb{K}[G]}$, i.e.,
$$
\mathfrak{m}(G):=\left\{ x \in \widehat{\mathbb{K}[G]} \mid \widehat{\Delta}(x)=x\widehat{\otimes}1 +1\widehat{\otimes}x \right\}
$$
with the induced filtration $\widehat{\Gamma_j}\mathfrak{m}(G):=\mathfrak{m}(G) \cap \widehat{I^j}$ for all $j \geqslant 1.$

Since exponential $\exp$ and logarithmic series $\log$ give the one-to-one correspondence between the primitive and group-like elements in complete Hopf algebra, the Malcev completion $\mathsf{M}(G)$ and the Malcev Lie algebra $\mathfrak{m}(G)$ of $G$ are equivalent, where $\exp$ and $\log$ is given by
$$
\exp(x):=\sum_{m=0}^{\infty}\frac{x^m}{m!}, \quad \forall x \in \mathfrak{m}(G), \quad \log(y):=\sum_{m=1}^{\infty}(-1)^{m-1} \frac{(y-1)^m}{m}, \quad \forall y \in \mathsf{M}(G).
$$

\subsection{ Group-like expansion}
Let $F_n$ be a free group and we choose a generating system $\mathbf{x}=\{x_1,\ldots, x_n\}$ of $F_n$. Let $H$ denote the abelianization of $F_n$ with $\mathbb{K}$ coefficients:
$$
H:=F_n/\Gamma_2 F_n\otimes \mathbb{K}.
$$
We set $X_i:=[x_i]\otimes_{\mathbb{Z}}1 \in H$ for $1\leqslant i\leqslant n$.
Let $T(H)$ be the tensor algebra generated by $H$ and $\widehat{T}(H)$ be its degree completion, i.e., $T(H)=\bigoplus_{k\geqslant 0}H^{\otimes k}$,  $\widehat{T}(H)=\prod_{k\geqslant 0}H^{\otimes k}.$
We note that $\widehat{T}(H)$ may be identified with $\mathbb{K}\langle \langle X_1,\ldots,X_n \rangle \rangle$.

A monoid map $\theta : F_n \rightarrow \widehat{T}(H)$ is called a {\it $\mathbb{K}$-valued Magnus expansion} of the free group $F_n$ if $\theta(x)\equiv 1+\{x\}\ \mod \ \prod_{k \geqslant 2}H^{\otimes k}$ for each $x\in F_n$. We restrict our selves to a special type of $\mathbb{K}$-valued Magnus expansions as follows. A $\mathbb{K}$-valued Magnus expansion $\theta: F_n \rightarrow \widehat{T}(H)$ is called a {\it group-like expansion} if it takes values in the group of group-like elements of $\widehat{T}(H)$, i.e., $\Delta(\theta(x))=\theta(x)\otimes \theta(x)$ and $\epsilon(\theta(x))=1$.

\begin{remark}\label{rem:group}{\rm (Properties of group-like expansions)\\
(1)\ A group-like expansion $\theta$ of $F_n$ extends to a unique complete Hopf algebra isomorphism
$$
\theta: \widehat{\mathbb{K}[F_n]} \overset{\simeq}{\longrightarrow}\widehat{T}(H)
$$
which is the identity at the graded level. Conversely, any such isomorphism $\theta$ restricts to a group-like expansion $\theta: F_n \rightarrow \widehat{T}(H)$.\\
(2)\ A group-like expansion $\theta$ of $F_n$ induces a unique filtered Lie algebra isomorphism
$$
\theta: \mathfrak{m}(F_n) \overset{\simeq}{\longrightarrow} \widehat{\mathfrak{L}}(H)
$$
which is the identity at the graded level. Conversely, any such isomorphism $\theta$ induces a unique group-like expansion $\theta: F_n \rightarrow \widehat{T}(H)$.\\
(3)\ Let $R$ be a normal subgroup of $F_n$ and let $\theta$ be a group-like expansion of $F_n$. We denote by $\langle \langle \log \theta(R)\rangle \rangle$ the closed ideal of $\widehat{\mathfrak{L}}(H)$ generated by $\log \theta(R)$. Then, $\theta : \mathfrak{m}(F_n) \rightarrow \widehat{\mathfrak{L}}(H)$ induces a unique filtered Lie algebra isomorphism
$$
\theta: \mathfrak{m}(F_n/R) \overset{\simeq}{\longrightarrow} \widehat{\mathfrak{L}}(H)/ \langle\langle \log \theta(R) \rangle \rangle.
$$}
\end{remark}

By Remark \ref{rem:group}(1), for any two expansions $\theta$ and $\theta'$ of $F_n$, there exists a unique filtered algebra automorphism $\psi: \widehat{T}(H) \rightarrow \widehat{T}(H)$ inducing the identity at the graded level and such that $\psi \circ \theta =\theta'$.


\subsection{Special expansions}
Let $D_n$ be a $n$ punctured disk. Let $\pi_1(D_n)$ be the fundamental group of $D_n$. Then, $\pi_1(D_n)$ can be identified with the free group $F_n$ of rank $n$ on $x_1,\ldots, x_n$ where $x_i$ represents a small loop around $i$-th puncture $(1\leqslant i \leqslant n)$. We note that $x_1\cdots x_n$ represents the boundary curve of $D_n$.

Let $H$ denote the first homology group of $D_n$ with $\mathbb{K}$ coefficients:
$$
H:=H_1(D_n, \mathbb{Z})\otimes \mathbb{K}
$$
The homology group $H$ is generated by $X_i:=[x_i]\otimes_{\mathbb{Z}}1$ $(1\leqslant i\leqslant n)$ corresponding to the generator $x_i$ $(1\leqslant i\leqslant n)$ of $\pi_1(D_n, p)$.

\begin{definition}$($\cite{MA2}$)${\rm
A group-like expansion  $\theta: F_n \rightarrow \widehat{T}(H)$ is called a {\it special expansion} if $\theta$ satisfies the following two conditions:\\
(1) (tangential condition) $\theta(x_i)=U_i\exp(X_i)U_i^{-1}$ for some $U_i \in \exp(\mathfrak{L})$\\
(2) (normalized condition) $\theta(x_1\cdots x_n)=\exp(X_1+\cdots +X_n)$
}\end{definition}


\section{Total Milnor invariants for the special derivations}
In this section, we introduce the total Milnor invariant for the special derivations of the complete graded free Lie algebra. Then, we define the total Milnor invariant for the monoid of string links in terms of a special expansion. Finally, we show some properties of it.
\subsection{The automorphism group of complete graded free Lie algebra}
Here, we recall the automorphism group of complete graded free Lie algebra following \cite{AT}. Let $\mathfrak{L}(H)$ be the graded free Lie algebra generated by $H$. We also denote by $\mathfrak{L}(H)$ its degree completion. We often simply denote by $\mathfrak{L}$ abbreviating $H$. We note that the universal enveloping algebra $\mathcal{U}\mathfrak{L}$ is the ring $\mathbb{K}\langle \langle X_1,\ldots, X_n \rangle \rangle$ 
of noncommutative power series over $\mathbb{K}$ on $n$ indeterminates  $X_1,\ldots, X_n$.

Let $\Der(\mathfrak{L})$ be the Lie algebra of derivations of $\mathfrak{L}$. Note that  each derivation $\psi \in \Der(\mathfrak{L})$ is completely determined by its values $\psi(X_1), \ldots, \psi(X_n) \in \mathfrak{L}$. 

Hence, the grading of $\Der(\mathfrak{L})$ is induced by the grading of $\mathfrak{L}$. 

For each $\psi \in \Der(\mathfrak{L})$, $\psi$ is called a {\it tangential derivation} of $\mathfrak{L}$ if there exists $Y_i \in \mathfrak{L}$ such that $\psi(X_i)=[Y_i, X_i]$ $(1\leqslant i \leqslant n)$. We denote by $\mathrm{tDer}(\mathfrak{L})$ the subspace  generated by tangential derivations and we may see that $\mathrm{tDer}(\mathfrak{L})$ forms a Lie subalgebra of $\Der(\mathfrak{L})$ (cf. \cite[Proposition 3.4]{AT}).

For each $\psi \in \mathrm{tDer}(\mathfrak{L})$, $\psi$ is called a special derivation if $\psi(X_1+\cdots+X_n)=0$, i.e. $[Y_1,X_1]+\cdots+[Y_n,X_n]=0$. Special derivations form a Lie subalgebra of $\mathfrak{L}$ and we denote it by $\mathrm{sDer}(\mathfrak{L})$

Since each $\psi \in \mathrm{sDer}(\mathfrak{L})$ is completely determined by its values $\psi(X_i)=[Y_i,X_i]$, we may identify $\mathrm{sDer}(\mathfrak{L})$ with $\bigoplus_{i=1}^n\mathfrak{L}/\langle X_i \rangle$ by the correspondence $\psi\mapsto (Y_i,\ldots, Y_n)$. In the following, for each $\psi \in \mathrm{sDer}(\mathfrak{L})$, we often write that  $\psi=(Y_1,\ldots, Y_n)$ by this identification.

Let $\mathrm{sAut}(\mathfrak{L})$ be the group of automorphisms which sends $X_i$ to $U_i X_i U_i^{-1}$ $(1\leqslant i \leqslant n)$ for some $U_i \in \exp(\mathfrak{L})$ and fix $X_1+\cdots +X_n$, i.e.
$$
\mathrm{sAut}(\mathfrak{L})=\left\{ \varphi \in \Aut(\mathfrak{L}) \left|
\begin{array}{l}
 \varphi(X_i)=U_i X_i U_i^{-1}\ (1\leqslant i \leqslant n)\\
 \mathrm{for\ some\ } U_i \in \exp(\mathfrak{L}) \\
\varphi(X_1+\cdots +X_n)=X_1+\cdots +X_n\\
 \end{array}\right.
\right\}.
$$
For each $\varphi \in \mathrm{sAut}(\mathfrak{L})$, we also write that $\varphi=(U_1,\ldots, U_n)$ as above under the condition that the coefficient of $X_i$ in $\log (U_i)$ is 0.  Then, the exponential $\exp$ and the logarithm $\log$ gives the bijection between $\mathrm{sDer}(\mathfrak{L})$ and $\mathrm{sAut}(\mathfrak{L})$.

\subsection{The total Milnor invariant for special derivation}
Let us define the total Milnor invariant for special derivations. 
\begin{definition}
The map 
$$
\mu: \mathrm{sDer}(\mathfrak{L}) \longrightarrow H \otimes \mathfrak{L}
$$
given by 
$$
\mu(\psi)=\sum_{i=1}^n X_i\otimes Y_i
$$
for each $\psi=(Y_1,\ldots, Y_n) \in \mathrm{sDer}(\mathfrak{L})$ is called the {\it total Milnor invariant }of $\psi$.
\end{definition}
\begin{remark}{\rm 
By composing $\mu$ with $\log$, we have the map
$$
\log\circ \mu: \mathrm{sAut}(\mathfrak{L}) \longrightarrow H \otimes \mathfrak{L}
$$
given by 
$$
\mu(\log(\varphi))=\sum_{i=1}^n X_i\otimes Y_i
$$
for each $\varphi=(\exp Y_1, \ldots, \exp Y_n) \in \mathrm{sAut}(\mathfrak{L})$. 
}
\end{remark}
In the following, we often denote  $\log\circ \mu$ simply by $\mu$ as far as no risk of confusion. Let us consider the bracket map $[-,-]: H\otimes \mathfrak{L} \rightarrow \mathfrak{L}; X\otimes Y \mapsto [X,Y]$.
\begin{lemma}\label{D}
The image of the total Milnor invariant $\mu(\mathrm{sDer}(\mathfrak{L}))$ is contained in $D(H):=\mathrm{Ker}([-,-])$.
\end{lemma}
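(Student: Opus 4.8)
The plan is to prove the lemma by a short direct computation, unwinding the definition of $\mu$ together with the two conditions defining a special derivation. Fix $\psi \in \mathrm{sDer}(\mathfrak{L})$ and write $\psi = (Y_1, \ldots, Y_n)$ under the identification of $\mathrm{sDer}(\mathfrak{L})$ with $\bigoplus_{i=1}^n \mathfrak{L}/\langle X_i\rangle$, so that $\psi(X_i) = [Y_i, X_i]$ for $1 \leqslant i \leqslant n$ and $\mu(\psi) = \sum_{i=1}^n X_i \otimes Y_i$.

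The first step is to compute the image of $\mu(\psi)$ under the bracket map $[-,-]\colon H \otimes \mathfrak{L} \to \mathfrak{L}$:
$$
[-,-]\bigl(\mu(\psi)\bigr) = \sum_{i=1}^n [X_i, Y_i] = -\sum_{i=1}^n [Y_i, X_i] = -\sum_{i=1}^n \psi(X_i),
$$
using antisymmetry of the Lie bracket in the middle equality. The second step is to recognize the right-hand side: since $\psi$ is a derivation (in particular $\mathbb{K}$-linear), $\sum_{i=1}^n \psi(X_i) = \psi(X_1 + \cdots + X_n)$, and the normalized condition in the definition of a special derivation gives $\psi(X_1 + \cdots + X_n) = 0$. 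Therefore $[-,-](\mu(\psi)) = 0$, i.e.\ $\mu(\psi) \in \mathrm{Ker}([-,-]) = D(H)$, which proves the lemma.

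There is no real obstacle here; the argument is essentially immediate from the definitions. The only point to keep in mind is the mild ambiguity in the choice of each $Y_i$ (well-defined only modulo $\mathbb{K}X_i$, since the centralizer of $X_i$ in the free Lie algebra $\mathfrak{L}$ is $\mathbb{K}X_i$); this does not affect the statement, since $X_i \otimes X_i \in \mathrm{Ker}([-,-])$, or one may simply fix representatives as in the definition of $\mathrm{sAut}(\mathfrak{L})$. The analogous assertion for $\log \circ \mu$ on $\mathrm{sAut}(\mathfrak{L})$ follows verbatim by applying the above computation to $\log \varphi \in \mathrm{sDer}(\mathfrak{L})$ for $\varphi \in \mathrm{sAut}(\mathfrak{L})$.
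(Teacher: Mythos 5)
Your proof is correct and follows the same route as the paper: the paper likewise observes that speciality gives $[X_1,Y_1]+\cdots+[X_n,Y_n]=0$ and concludes immediately. Your version just spells out the unwinding of $\psi(X_1+\cdots+X_n)=0$ into that identity, plus a harmless remark on the ambiguity of the $Y_i$.
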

\begin{proof}
For each special derivation $\psi=(Y_1,\dots, Y_n)$, we have $[X_1,Y_1]+\cdots +[X_n,Y_n]=0$ by speciality. Hence, the assertion immediately follows.
\end{proof}
\subsection{The total Milnor invariant for string links}

Here, we give the ``infinitesimal'' version of the Artin representation of pure braid group. For $\phi \in \Aut(F_n)$, $\psi$ induces the automorphism $\widehat{\phi}$ of the complete Hopf algebra $\widehat{\mathbb{K}[F_n]}$. Hence, by restricting to the primitive part, we obtain a filtered Lie algebra isomorphism $\mathfrak{m}(\phi): \mathfrak{m}(F_n) \rightarrow \mathfrak{m}(F_n)$. Then, we have a group homomorphism
$$
\mathfrak{m}: \Aut(F_n) \longrightarrow \Aut^{\mathrm{fil}}(\mathfrak{m}(F_n)), \quad \phi \mapsto \mathfrak{m}(\phi).
$$
where $\Aut^{\mathrm{fil}}(\mathfrak{m}(F_n))$ denotes the group of filtration preserving automorphisms of $\mathfrak{m}(F_n)$. Moreover, it turns out that we have an isomorphism
$$
\mathfrak{m}: \Aut(F_n) \overset{\sim}{\longrightarrow} \Aut^{\mathrm{fil}}_{\log(F_n)}(\mathfrak{m}(F_n)), \quad \phi \mapsto \mathfrak{m}(\phi).
$$
where we set $\Aut(G)^{\mathrm{fil}}_{\log(G)}(\mathfrak{m}(G)):= \{ \psi \in \Aut(\mathfrak{m}(G)) \mid \psi(\log(G))=\log(G)\}$. Hence, by composing $Art$ in \S1.1 with $\mathfrak{m}$, we obtain the infinitesimal Artin representation
$$
\mathfrak{m}(Art):P_n \longrightarrow \Aut_0(\mathfrak{m}(F_n)), \quad L\mapsto \mathfrak{m}(Art(L))
$$
where $\Aut_0(\mathfrak{m}(F_n))$ denotes the following subgroup of $\Aut(\mathfrak{m}(F_n))$:
$$
\Aut_0(\mathfrak{m}(F_n)):=\left\{\Phi \in \Aut(\mathfrak{m}(F_n)) \left|
\begin{array}{l}
 \Phi(\log(x_i))=\log(y_i x_iy_i^{-1})\\
 \mathrm{for\ some\ } y_i \in F_n \ (1\leqslant i \leqslant n) \\
 \Phi(\log(x_1\cdots x_n))=\log(x_1\cdots x_n)\\
 \Phi(\log(F_n))=\log(F_n)
 \end{array}\right.
\right\}.
$$

Let $\theta: \mathfrak{m}(F_n) \rightarrow \widehat{\mathfrak{L}}$ be a special expansion. 
For any $\phi \in \Aut(\mathfrak{m}(F_n))$, we define the automorphism $\theta^{\ast}(\phi):=\theta \circ \phi \circ \theta^{-1} \in \Aut^{\mathrm{fil}}(\widehat{\mathfrak{L}})$, where $\Aut(\widehat{\mathfrak{L}})$ denotes the subgroup of $\Aut^{\mathrm{fil}}(\widehat{\mathfrak{L}})$ consisting of filtration-preserving automorphisms. Hence, we have a homomorohism, called the {\it special Artin representation}, 
$$
Art^{\theta}: P_n \rightarrow \Aut_0(F_n) \rightarrow \Aut_0(\mathfrak{m}(F_n)) \overset{\theta^{\ast}}{\rightarrow}\mathrm{sAut}(\mathfrak{L})
$$

Then, we have the following proposition.
\begin{proposition}
The image of $Art^{\theta}$ is given by
$$
Art^{\theta}(PB_n)=\left\{\varphi \in \Aut(\mathfrak{L}) \left|
\begin{array}{l}
\varphi(X_i)=(U_i^{-1}\theta(y_i)U_i)X_i (U_i^{-1}\theta(y_i)U_i)^{-1}\\
 (1\leqslant i \leqslant n) \\
\varphi(X_1+\cdots +X_n)
 \end{array}\right.
\right\}
$$
when $\theta(x_i)=U_i \exp(X_i) U_i^{-1}$ for some $U_i \in \exp(\mathfrak{L})$ $(1\leqslant i \leqslant n)$.
\end{proposition}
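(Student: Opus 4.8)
The plan is to compute $Art^{\theta}$ on the generators $X_i$ directly from its definition and then read off the image. Fix $L\in PB_n$ and write $Art(L)(x_i)=y_ix_iy_i^{-1}$, with $y_i=y_i(L)$ the normalized words of Remark~\ref{longi}. First I would unwind the three arrows defining $Art^{\theta}$: since $\theta$ is a group-like expansion it extends, by Remark~\ref{rem:group}(1), to a complete Hopf algebra isomorphism $\widehat{\theta}\colon\widehat{\mathbb{K}[F_n]}\xrightarrow{\simeq}\widehat{T}(H)$ that is the identity at the graded level, while $Art(L)$ extends to a Hopf algebra automorphism $\widehat{Art(L)}$ of $\widehat{\mathbb{K}[F_n]}$; restricting to primitive elements identifies $Art^{\theta}(L)$ with $\widehat{\theta}\circ\widehat{Art(L)}\circ\widehat{\theta}^{-1}$ acting on $\widehat{\mathfrak{L}}$. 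Because $\mathfrak{m}(Art(L))\in\Aut_0(\mathfrak{m}(F_n))$ and $\theta$ satisfies both the tangential and the normalized conditions of a special expansion, $Art^{\theta}(L)$ sends each $X_i$ to an $\exp(\mathfrak{L})$-conjugate of itself and fixes $X_1+\cdots+X_n$; hence $Art^{\theta}(L)\in\mathrm{sAut}(\mathfrak{L})$ and the condition $\varphi(X_1+\cdots+X_n)=X_1+\cdots+X_n$ in the right-hand set is satisfied.

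For the inclusion $\subseteq$, I would determine $\varphi(X_i)$ for $\varphi:=Art^{\theta}(L)$ by applying $\varphi$ to the identity $\theta(x_i)=U_i\exp(X_i)U_i^{-1}$. On one hand $\varphi(\theta(x_i))=\theta\bigl(\widehat{Art(L)}(x_i)\bigr)=\theta(y_ix_iy_i^{-1})=\theta(y_i)\,U_i\exp(X_i)U_i^{-1}\,\theta(y_i)^{-1}$; on the other hand $\varphi(\theta(x_i))=\varphi(U_i)\exp\!\bigl(\varphi(X_i)\bigr)\varphi(U_i)^{-1}$. Comparing the two, and using that the centralizer of $\exp(X_i)$ among group-like elements of $\widehat{T}(H)$ is $\exp(\mathbb{K}X_i)$, one finds that $\varphi(X_i)$ is the conjugate of $X_i$ by $\varphi(U_i)^{-1}\theta(y_i)U_i$. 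It then remains to identify this conjugator---up to the ambiguity modulo $\exp(\mathbb{K}X_i)$ fixed by the normalization of elements of $\mathrm{sAut}(\mathfrak{L})$---with $U_i^{-1}\theta(y_i)U_i$; here one uses that $\widehat{Art(L)}$ acts as the identity on $H=F_n/\Gamma_2F_n\otimes\mathbb{K}$ since $Art(L)$ is basis-conjugating, together with the normalizations of the $U_i$ (no $X_i$-term in $\log U_i$) and of the $y_i$ (no $x_i$-term in $[y_i]$). This yields the asserted formula for $\varphi(X_i)$.

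For the reverse inclusion, I would begin with $\varphi$ in the right-hand set, with associated data $(y_1,\dots,y_n)$, and form the endomorphism $\rho$ of $F_n$ given by $x_i\mapsto y_ix_iy_i^{-1}$. It is basis-conjugating by construction, and the hypothesis $\varphi(X_1+\cdots+X_n)=X_1+\cdots+X_n$ together with the normalized condition $\theta(x_1\cdots x_n)=\exp(X_1+\cdots+X_n)$ forces $\rho(x_1\cdots x_n)=x_1\cdots x_n$, so $\rho\in\Aut_0(F_n)$. Transporting $\rho$ back along the Artin isomorphism $Art\colon PB_n\xrightarrow{\simeq}\Aut_0(F_n)$ produces a pure braid $L$ with $Art(L)=\rho$, and the first part then gives $Art^{\theta}(L)=\varphi$.

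I expect the main obstacle to be the identification of the conjugating element in the second paragraph: extracting $\varphi(X_i)$ from $\varphi(\theta(x_i))$ pins the conjugator down only up to right multiplication by $\exp(\mathbb{K}X_i)$, and recognizing it as precisely $U_i^{-1}\theta(y_i)U_i$ requires carrying the two normalization conventions---the one attached to $\theta$ through the $U_i$ and the one attached to $L$ through the $y_i$---consistently through the whole computation. A secondary point, in the reverse inclusion, is to confirm that the word data $(y_i)$ appearing in a member of the right-hand set really does assemble into an automorphism of $F_n$, which is exactly where one invokes that $Art$ is an isomorphism onto $\Aut_0(F_n)$ rather than just a homomorphism.
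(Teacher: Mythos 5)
Your computation follows essentially the same route as the paper's proof: both transport the relation $\theta(x_i)=U_i\exp(X_i)U_i^{-1}$ through $\varphi:=Art^{\theta}(L)=\theta\circ\mathfrak{m}(Art(L))\circ\theta^{-1}$ and read off the conjugator of $X_i$. But the step you yourself flag as ``the main obstacle'' is a genuine gap, and the tools you propose do not close it. Your comparison correctly yields
$$
\exp\bigl(\varphi(X_i)\bigr)=\bigl(\varphi(U_i)^{-1}\theta(y_i)U_i\bigr)\exp(X_i)\bigl(\varphi(U_i)^{-1}\theta(y_i)U_i\bigr)^{-1},
$$
so the asserted formula is equivalent to the \emph{exact} statement that $U_i\varphi(U_i)^{-1}$ lies in the centralizer of $\theta(y_i)U_i\exp(X_i)U_i^{-1}\theta(y_i)^{-1}$, which is the one-parameter group $\exp\bigl(\mathbb{K}\cdot\varphi(U_iX_iU_i^{-1})\bigr)$. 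The fact that $\varphi$ induces the identity on $H$ only gives $\varphi(U_i)\equiv U_i$ modulo degree $\geqslant 2$, and the normalizations of $\log U_i$ and of $y_i$ add nothing in higher degrees; a low-degree congruence cannot be upgraded to the exact identity required. In fact the identity fails in general: for $n=2$ and $L=A_{12}$ one has $\varphi=\mathrm{Ad}_{\exp(X_1+X_2)}$ since $\theta(x_1x_2)=\exp(X_1+X_2)$, and comparing degree-$3$ terms of $\varphi(X_1)$ with those of $(U_1^{-1}\theta(y_1)U_1)X_1(U_1^{-1}\theta(y_1)U_1)^{-1}$ leaves a discrepancy $a\,[X_1,[X_2,X_1]]$, where $aX_2$ is the degree-one part of $\log U_1$; the tangential and normalized conditions force $a$ and the analogous coefficient for $U_2$ to differ by $\tfrac12$, so they cannot both vanish. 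Hence the identification you defer cannot be carried out as described.

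You should know that the paper's own proof makes the identical silent replacement: in the last line of its substitution computation, $\theta\bigl(\mathfrak{m}(Art(L))(g_i)\bigr)=\varphi(U_i)^{-1}$ is written as $U_i^{-1}=\theta(g_i)$. So you have faithfully reproduced the paper's argument, including its weak point, and you deserve credit for isolating exactly where it is. The honest output of the computation --- yours and the paper's --- is $\varphi(X_i)=W_iX_iW_i^{-1}$ with $W_i=\varphi(U_i)^{-1}\theta(y_i)U_i$, and the proposition should be read or restated with that conjugator. Two smaller remarks: the reverse inclusion you supply is not addressed in the paper at all, and as you half-note it needs an extra justification that the assignment $x_i\mapsto y_ix_iy_i^{-1}$ extracted from an element of the right-hand set is an automorphism of $F_n$ (and that the $y_i$ are genuine group elements rather than elements of the Malcev completion) before the Artin isomorphism can be invoked.
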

\begin{proof}
Since $\theta$ is a special expansion, we have $\theta^{-1}(X_i)=\log(g_ix_ig_i^{-1})$ for $g_i \in \exp(\mathfrak{m}(F_n))$ such that $\theta(g_i)=U_i^{-1}$ $(1\leqslant i \leqslant n)$. Then, we have
\begin{eqnarray*}
&&\theta\circ \mathfrak{m}(Art(L))\circ \theta^{-1}(X_i)\\
&=&\theta\circ \mathfrak{m}(Art(L))(\log(g_ix_ig_i^{-1}))\\
&=&\theta(\log(\mathfrak{m}(Art)(L)(g_ix_ig_i^{-1}))\\
&=&\theta(\log(g_ix_ig_i^{-1}|_{x_j=y_jx_jy_j^{-1}})\\
&=&\log(\theta(g_ix_ig_i^{-1})|_{\theta(x_j)=\theta(y_jx_jy_j^{-1})})\\
&=&{X_i}|_{U_jX_jU_j^{-1}=\theta(y_j)U_jX_jU_j^{-1}\theta(y_j)^{-1}}\\
&=&U_i^{-1}\theta(y_i)U_iX_iU_i^{-1}\theta(y_i)^{-1}U_i.
\end{eqnarray*}
Hence, the assertion follows.
\end{proof}

Then, we may define the total Milnor invariant for the pure braid group $PB_n$ as follows: For $L \in PB_n$, we set $\mu^{\theta}=\mu \circ \log \circ Art^{\theta}$, i.e. we have
$$
\mu^{\theta}: PB_n \longrightarrow H\otimes \mathfrak{L};\quad  L \mapsto \mu(\log(Art^{\theta}(L))).
$$
Then, we have the following theorem.
\begin{theorem}\label{Milnor}
The restriction of the degree $k$ part of the total Milnor invariant to the $k$-th term of the Milnor filtration $PB_n(k):=PB_n\cap SL_n(k)$ coincides with the Milnor invariant of degree $k$, i.e. we have
$$
\mu_k^{\theta}|_{PB_n(k)}=\mu_k \in \Hom(PB_n(k), H\otimes \mathfrak{L}_{k}).
$$
\end{theorem}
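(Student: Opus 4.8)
The plan is to unwind both sides of the claimed identity in degree $k$ and compare them directly. Fix $L \in PB_n(k)$ and write $Art(L)(x_i) = y_i x_i y_i^{-1}$ for the normalized longitudes $y_i = y_i(L) \in F_n$. Since $L \in PB_n(k)$ the map $Art_k(L)$ is trivial, so $[y_i,x_i] \in \Gamma_{k+1}F_n$; as the centralizer of $X_i$ in $\mathfrak{L}(H)$ is $\mathbb{K}X_i$ and $[y_i]$ has no $X_i$-component in degree $1$, this forces $y_i \in \Gamma_k F_n$, and by definition $Y_i^{(k)}(L) \in \mathfrak{L}_k(H)$ is the image of the class $[y_i] \in \gr_k F_n$ under the canonical identification $\gr_k F_n \otimes \mathbb{K} \cong \mathfrak{L}_k(H)$, so that $\mu_k(L) = \sum_i X_i \otimes Y_i^{(k)}(L)$. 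It therefore suffices to show that the degree-$k$ component of $\mu^\theta(L)$ equals $\sum_i X_i \otimes Y_i^{(k)}(L)$.

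Next I would compute $Art^\theta(L)$ using the Proposition: writing $\theta(x_i) = U_i\exp(X_i)U_i^{-1}$, we have $Art^\theta(L)(X_i) = V_i X_i V_i^{-1}$ with $V_i = U_i^{-1}\theta(y_i)U_i$. Because $y_i \in \Gamma_k F_n$ and $\theta$ induces a filtered Lie algebra isomorphism $\mathfrak{m}(F_n) \xrightarrow{\sim} \widehat{\mathfrak{L}}(H)$ that is the identity at the graded level (Remark \ref{rem:group}(2)), the element $\log\theta(y_i)$ lies in degrees $\geqslant k$, and so does $\log V_i$, since conjugation by $U_i \in \exp(\widehat{\mathfrak{L}}_{\geqslant 1})$ preserves this filtration; in particular $\log V_i$ carries no degree-$1$ term, so the coordinate normalization of $\mathrm{sAut}(\mathfrak{L})$ affects nothing in degrees $\leqslant k$. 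Comparing, degree by degree, $\exp(\log Art^\theta(L))(X_i) = X_i + [Y_i,X_i] + (\text{degree} \geqslant 2k+1)$ --- where $\log Art^\theta(L) = (Y_1,\dots,Y_n) \in \mathrm{sDer}(\mathfrak{L})$ --- with $V_i X_i V_i^{-1} = X_i + [\log V_i, X_i] + (\text{degree} \geqslant 2k+1)$, and using again that $\mathrm{ad}_{X_i}$ is injective on $\mathfrak{L}_k/\mathbb{K}X_i$, one gets $(Y_i)_k \equiv (\log V_i)_k$ modulo $\mathbb{K}X_i$, which is all that enters $\mu$. Hence $\mu_k^\theta(L) = \sum_i X_i \otimes (\log V_i)_k$.

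It remains to identify $(\log V_i)_k$ with $Y_i^{(k)}(L)$. The conjugation $V_i = U_i^{-1}\theta(y_i)U_i$ acts on $\widehat{\mathfrak{L}}(H)$ as $\mathrm{Ad}(U_i^{-1}) = \mathrm{id} + (\text{degree-raising})$, which does not change the lowest graded piece, so $(\log V_i)_k = (\log\theta(y_i))_k$; and since $\theta$ is the identity at the graded level, the induced map on associated graded is the canonical isomorphism $\gr\,\mathfrak{m}(F_n) \cong \mathfrak{L}(H)$, whence $(\log\theta(y_i))_k \in \mathfrak{L}_k(H)$ is precisely the image of $[y_i] \in \gr_k F_n \otimes \mathbb{K}$, i.e. $Y_i^{(k)}(L)$. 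Summing over $i$ gives $\mu_k^\theta(L) = \mu_k(L)$. Finally, $\mu_k^\theta$ restricts to a homomorphism on $PB_n(k)$ because $Art^\theta$ is a group homomorphism with $Art^\theta(PB_n(k)) \subseteq \exp(\mathrm{sDer}_{\geqslant k}(\mathfrak{L}))$ and $[\mathrm{sDer}_{\geqslant k}, \mathrm{sDer}_{\geqslant k}] \subseteq \mathrm{sDer}_{\geqslant 2k}$, so by Baker--Campbell--Hausdorff the degree-$k$ part of $\log Art^\theta(\cdot)$ is additive there.

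The step I expect to be the main obstacle is not any single computation but the careful reconciliation of normalizations and canonical identifications: one must track the passage between the coordinates $(U_1,\dots,U_n)$ on $\mathrm{sAut}(\mathfrak{L})$ and $(Y_1,\dots,Y_n)$ on $\mathrm{sDer}(\mathfrak{L})$ (which agree only in lowest degree, modulo $\mathbb{K}X_i$), and --- more substantively --- one must invoke the statement that a group-like expansion is ``the identity at the graded level'' in exactly the form that its associated graded realizes the canonical isomorphism $\gr_k F_n \otimes \mathbb{K} \xrightarrow{\sim} \mathfrak{L}_k(H)$ used to define the classical Milnor invariant, so that the lowest-degree term of $\log\theta(y_i)$ literally recovers $Y_i^{(k)}(L)$.
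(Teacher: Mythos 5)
Your argument is correct and follows essentially the same route as the paper: apply the proposition to write $Art^{\theta}(L)(X_i)$ as conjugation by $U_i^{-1}\theta(y_i(L))U_i$, use that a group-like expansion is the identity at the graded level so that $\log\theta(y_i(L))$ has leading term $Y_i^{(k)}(L)$, and observe that conjugation by $U_i$ does not disturb that leading term. You supply more detail than the paper at two points it leaves implicit --- why $y_i\in\Gamma_kF_n$, and the lowest-degree agreement between the $\mathrm{sAut}$-coordinates $(U_1,\dots,U_n)$ and the $\mathrm{sDer}$-coordinates $(Y_1,\dots,Y_n)$ --- but the underlying proof is the same.
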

\begin{proof}
For $L \in PB_n(k)$, we have $Art^{\theta}(L)(X_i)=U_i^{-1}\theta(y_i(L))U_i$ and $\log\theta(y_i(L))=Y_i^{(k)}(L)+(\text{higer degree terms})$. Since $\theta$ is a Magnus expansion, $Y_i^{(k)}(L)$ is independent of choice of $\theta$, especially, we have
$$
\theta^M(y_i)=1+Y_i^{(k)}(L)+(\text{higer degree terms})
$$
where $\theta^M$ is the standard Magnus expansion defined by $x_i\mapsto 1+X_i$. Moreover, by direct computation, we may see that $\log(U_i^{-1}\theta(y_i(L))U_i)=Y_i^{(k)}(L)+(\text{higher degree terms})$. Hence, by definition of $\mu^{\theta}$, we conclude that $\mu_k^{\theta}|_{PB_n(k)}=\mu_k$. This completes the proof.
\end{proof}

As in \S1.1 the Artin representation of the pure braid group can be extended to the monoid of string links. Hence, we also have the infinitesimal Artin representation of the monoid of string links:
$$
\mathfrak{m}(Art_k(L)): SL_n \longrightarrow \Aut_0(\mathfrak{m}(F_n/\Gamma_{k+1}F_n)); \quad L\mapsto \mathfrak{m}(Art_k(L)).
$$
One may see that $\Ker(\mathfrak{m}(Art_k(L)))=SL_n(k)$. In terms of a special expansion $\theta$, we have the special Artin representation
$$
Art_k^{\theta}: SL_n \longrightarrow \mathrm{sAut}(\mathfrak{L}/\mathfrak{L}_{\geqslant k+1}); \quad L\mapsto \theta^{\ast}( \mathfrak{m}(Art_k(L))).
$$
Hence, we have the truncated total Milnor invariant
$$
\mu_{[1,k[}^{\theta}: SL_n \longrightarrow H\otimes \mathfrak{L}_{\geqslant 1}/\mathfrak{L}_{\geqslant k};\quad L \mapsto \mu(\log(Art^{\theta}_k(L))).
$$
By taking the inverse limit of the maps $\mu_{[1,k[}^{\theta}$ with respect to $k$, we obtain a map
$$
\mu^{\theta}:SL_n \longrightarrow H \otimes \mathfrak{L}
$$
since we have the projection $\mu^{\theta}_{[1,k[} \rightarrow \mu^{\theta}_{[1,l[}$ induced by the canonical projection $H\otimes \mathfrak{L}_{\geqslant 1}/\mathfrak{L}_{\geqslant k} \rightarrow H\otimes \mathfrak{L}_{\geqslant 1}/\mathfrak{L}_{\geqslant l}$ for $1\leqslant k \leqslant l$. The map $\mu^{\theta}$ restricts to the total Milnor invariant on $P_n$. And we can easily extend  Theorem \ref{Milnor} to the case of $SL_n$.
\begin{theorem}
The restriction of the degree $k$ part of the total Milnor invariant to the $k$-th term of the Milnor filtration $SL_n(k)$ coincides with the Milnor invariant of degree $k$, i.e. we have
$$
\mu_k^{\theta}|_{SL_n(k)}=\mu_k \in \Hom(SL_n(k), H\otimes \mathfrak{L}_{k}).
$$

\end{theorem}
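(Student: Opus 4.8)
The plan is to repeat the computation proving Theorem~\ref{Milnor} essentially verbatim, after reducing everything to a nilpotent quotient. Since the degree~$k$ part of the inverse‑limit map $\mu^\theta\colon SL_n\to H\otimes\mathfrak{L}$ is already computed by the truncated invariant $\mu_{[1,k+1[}^\theta(L)=\mu(\log(Art_{k+1}^\theta(L)))$, i.e.\ by the induced automorphism of the free nilpotent Lie algebra $\mathfrak{L}/\mathfrak{L}_{\geqslant k+2}$, no convergence issue arises and one is exactly in the situation of Theorem~\ref{Milnor}, only carried out inside a free nilpotent quotient.

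First I would record what the longitudes of $L\in SL_n(k)$ look like. By Stallings' theorem and Remark~\ref{longi}, the automorphism $Art_{k+1}(L)\in\Aut_0(F_n/\Gamma_{k+2}F_n)$ sends the class of $x_i$ to the class of $y_i^{(k+1)}(L)\,x_i\,y_i^{(k+1)}(L)^{-1}$. The identification $SL_n(k)=\Ker(\mathfrak{m}(Art_k))$ yields $[y_i^{(k+1)}(L),x_i]\in\Gamma_{k+1}F_n$; since the centralizer of $X_i$ in the free Lie algebra $\mathfrak{L}(H)$ is $\mathbb{K}\cdot X_i$ and $y_i$ is normalized so that the $x_i$‑coefficient of its class in $F_n/\Gamma_2F_n$ vanishes, this forces the class of $y_i^{(k+1)}(L)$ into $\Gamma_kF_n/\Gamma_{k+2}F_n$, with leading term equal to $Y_i^{(k)}(L)\in\mathfrak{L}_k(H)$, the element occurring in $\mu_k(L)=\sum_i X_i\otimes Y_i^{(k)}(L)$. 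Next I would run the special Artin representation computation: arguing as in the Proposition computing $Art^\theta(PB_n)$ (applied in the nilpotent quotient), $Art_{k+1}^\theta(L)(X_i)=V_iX_iV_i^{-1}$ with $V_i=U_i^{-1}\theta\bigl(y_i^{(k+1)}(L)\bigr)U_i$, where $\theta(x_i)=U_i\exp(X_i)U_i^{-1}$. Because $\log\theta\colon\mathfrak{m}(F_n)\to\widehat{\mathfrak{L}}(H)$ is a filtered Lie‑algebra isomorphism inducing the identity on the associated graded, $\log\theta(y_i^{(k+1)}(L))$ lies in $\widehat{\mathfrak{L}}_{\geqslant k}(H)$ with degree~$k$ part $Y_i^{(k)}(L)$; this leading term agrees with the one computed by the standard Magnus expansion $x_i\mapsto 1+X_i$, hence is independent of $\theta$. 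Conjugation does not change it: $\log V_i=\mathrm{Ad}_{U_i^{-1}}\bigl(\log\theta(y_i^{(k+1)}(L))\bigr)=\log\theta(y_i^{(k+1)}(L))+(\text{terms in }\widehat{\mathfrak{L}}_{\geqslant k+1})$, since $\log U_i\in\widehat{\mathfrak{L}}_{\geqslant 1}$, and the normalization of $\log V_i$ is automatic (for $k\geqslant 2$ it has no degree~$1$ part, and for $k=1$ it is the normalization of $y_i$). Taking the degree~$k$ part of $\mu^\theta(L)=\sum_i X_i\otimes\log V_i$ then gives $\sum_i X_i\otimes Y_i^{(k)}(L)=\mu_k(L)$, which is the assertion.

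The only point requiring care — rather than a genuine obstacle — is the bookkeeping: one must confirm that the degree~$k$ component of the inverse‑limit map $\mu^\theta$ on $SL_n$ is genuinely computed inside the nilpotent truncation $\mathfrak{L}/\mathfrak{L}_{\geqslant k+2}$, so that the group‑theoretic argument of Theorem~\ref{Milnor} transfers unchanged, and that, $SL_n$ being only a monoid, the role played there by $\Ker(Art)$ is taken over by the identity $SL_n(k)=\Ker(\mathfrak{m}(Art_k))$ together with Stallings' theorem and the fact (again from Remark~\ref{longi}) that the $k$‑th Artin representation is determined by the longitude words $y_i^{(k)}(L)$. Everything else is formally identical to the pure braid case.
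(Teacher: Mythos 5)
Your proposal is correct and follows exactly the route the paper intends: the paper gives no separate argument for this theorem, stating only that Theorem~\ref{Milnor} ``easily extends'' to $SL_n$ via the $k$-th Artin representation, and your reduction to the nilpotent truncation $\mathfrak{L}/\mathfrak{L}_{\geqslant k+2}$ followed by the leading-term computation $\log\bigl(U_i^{-1}\theta(y_i)U_i\bigr)=Y_i^{(k)}(L)+(\text{higher degree terms})$ is precisely that extension, with the details (Stallings' theorem, the centralizer argument placing the longitude in $\Gamma_kF_n$, independence of $\theta$) filled in. No gap.
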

\subsection{The diagrammatic interpretation of $\mu^{\theta}$}
In this section, we show that the truncation of $\mu^{\theta}$ is a monoid homomorphism. In the following, we keep the notation as in \S 6.3.
\begin{proposition}\label{truncation}
The restriction of degree $[k, 2k[$ truncation of $\mu^{\theta}$ to the $k$-th Milnor submonoid $SL_n$
$$
\mu^{\theta}_{[k,2k[}:=\sum_{m=k}^{2k-1}\mu_m^{\theta}: SL_n(k) \longrightarrow \bigoplus_{m=k}^{2k-1}H\otimes \mathfrak{L}_m
$$
is a monoid homomorphism and its kernel is $SL_n(2k)$.
\end{proposition}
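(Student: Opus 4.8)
The plan is to factor $\mu^{\theta}_{[k,2k[}|_{SL_n(k)}$ through the special Artin representation and to show that, modulo degree $2k$, this representation becomes ``additive'' on $SL_n(k)$. First I would fix notation: for $L\in SL_n$ write $Art^{\theta}(L)(X_i)=V_i(L)\,X_i\,V_i(L)^{-1}$ with $V_i(L)=U_i^{-1}\theta(y_i(L))U_i$, normalized so that the coefficient of $X_i$ in the degree-one part of $\log V_i(L)$ vanishes, and put $Y_i(L):=\log V_i(L)\in\widehat{\mathfrak{L}}$, so that $\mu^{\theta}(L)=\sum_{i=1}^n X_i\otimes Y_i(L)$. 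The key preliminary fact, used twice below, is that for every $m\geqslant 1$ one has $L\in SL_n(m)$ if and only if $Y_i(L)\in\mathfrak{L}_{\geqslant m}$ for all $i$. Indeed, $L\in SL_n(m)=\Ker(Art^{\theta}_m)$ says exactly that $V_i(L)X_iV_i(L)^{-1}\equiv X_i$ modulo terms of degree $\geqslant m+1$; writing $d$ for the smallest degree occurring in $Y_i(L)$, the lowest-degree term of $V_i(L)X_iV_i(L)^{-1}-X_i$ is $[(Y_i(L))_d,X_i]$, which is nonzero because the centralizer of $X_i$ in $\mathfrak{L}(H)$ equals $\mathbb{K}X_i$ and the normalization rules out $(Y_i(L))_d\in\mathbb{K}X_i$; hence $d\geqslant m$, i.e. $Y_i(L)\in\mathfrak{L}_{\geqslant m}$, and the converse is immediate.

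For the homomorphism property, let $L,L'\in SL_n(k)$ --- note that $SL_n(k)=\Ker(Art_k)$ is a submonoid. Since $Art^{\theta}$ is compatible with the stacking product, $V_i(L\cdot L')$ equals $Art^{\theta}(L)\bigl(V_i(L')\bigr)\cdot V_i(L)$ up to the order of the two factors. Because every $Y_j(L)$ lies in $\mathfrak{L}_{\geqslant k}$, the automorphism $Art^{\theta}(L)$ sends each $X_j$ to $X_j$ plus terms of degree $\geqslant k+1$; applying it to $V_i(L')$, whose non-scalar part lies in degrees $\geqslant k$, therefore changes $V_i(L')$ only by terms of degree $\geqslant 2k$. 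Hence $V_i(L\cdot L')\equiv V_i(L')\,V_i(L)$ modulo terms of degree $\geqslant 2k$, and applying $\log$ together with the Baker--Campbell--Hausdorff formula --- all bracket corrections having degree $\geqslant 2k$ --- gives $Y_i(L\cdot L')\equiv Y_i(L)+Y_i(L')$ modulo $\mathfrak{L}_{\geqslant 2k}$. Projecting onto degrees $k,\dots,2k-1$ yields $\mu^{\theta}_{[k,2k[}(L\cdot L')=\mu^{\theta}_{[k,2k[}(L)+\mu^{\theta}_{[k,2k[}(L')$, so the truncation is a monoid homomorphism into the abelian group $\bigoplus_{m=k}^{2k-1}H\otimes\mathfrak{L}_m$ (and by Lemma \ref{D} its image lies in $\bigoplus_{m=k}^{2k-1}D_m(H)$).

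For the kernel, let $L\in SL_n(k)$, so that $Y_i(L)\in\mathfrak{L}_{\geqslant k}$ already. Then $\mu^{\theta}_{[k,2k[}(L)=\sum_i X_i\otimes\bigl(Y_i(L)\bigr)_{[k,2k[}$ vanishes if and only if each $Y_i(L)$ has no component in degrees $k,\dots,2k-1$, i.e. $Y_i(L)\in\mathfrak{L}_{\geqslant 2k}$ for all $i$; by the preliminary fact with $m=2k$ this is precisely the condition $L\in SL_n(2k)$. Hence $\Ker\bigl(\mu^{\theta}_{[k,2k[}|_{SL_n(k)}\bigr)=SL_n(2k)$.

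I expect the main technical points to be (i) confirming that $Art^{\theta}$ is a genuine (anti)homomorphism of monoids on all of $SL_n$ and not only on $PB_n$, which follows from the functoriality of $\mathfrak{m}(-)$ and $\theta^{\ast}$ together with the multiplicativity of the Stallings--Artin maps $Art_k$, and (ii) the degree bookkeeping in the second paragraph. Point (ii) can be packaged conceptually: $\mathrm{sAut}(\mathfrak{L})$ carries a filtration whose $k$-th term $\mathrm{sAut}(\mathfrak{L})[k]:=\exp\bigl(\prod_{m\geqslant k}\mathrm{sDer}_m(\mathfrak{L})\bigr)$ satisfies $[\mathrm{sAut}(\mathfrak{L})[k],\mathrm{sAut}(\mathfrak{L})[k]]\subseteq\mathrm{sAut}(\mathfrak{L})[2k]$, so that $\mathrm{sAut}(\mathfrak{L})[k]/\mathrm{sAut}(\mathfrak{L})[2k]\cong\bigoplus_{m=k}^{2k-1}\mathrm{sDer}_m(\mathfrak{L})$ canonically; under this identification $\mu^{\theta}_{[k,2k[}|_{SL_n(k)}$ is simply $Art^{\theta}$ followed by this projection and by $\mu$, and the statement becomes the assertion that $Art^{\theta}$ maps $SL_n(k)$ into $\mathrm{sAut}(\mathfrak{L})[k]$ and that the preimage of $\mathrm{sAut}(\mathfrak{L})[2k]$ under $Art^{\theta}|_{SL_n(k)}$ is $SL_n(2k)$.
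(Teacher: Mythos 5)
Your proof is correct and follows essentially the same route as the paper's: both factor through $Art^{\theta}$, use that $Art^{\theta}(L)$ acts trivially on $\mathfrak{L}_{\geqslant k}/\mathfrak{L}_{\geqslant 2k}$ for $L\in SL_n(k)$, and conclude additivity modulo $\mathfrak{L}_{\geqslant 2k}$ via the Baker--Campbell--Hausdorff formula. The only difference is that you spell out the equivalence $L\in SL_n(m)\Leftrightarrow Y_i(L)\in\mathfrak{L}_{\geqslant m}$ (via the centralizer of $X_i$ and the normalization) to justify the kernel claim, which the paper simply asserts as clear from the definition.
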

\begin{proof}
By definition of special  Artin representation $Art^{\theta}$, for $L, L' \in SL_n(k)$, we have
\begin{eqnarray*}
&Art^{\theta}(L\circ L')(X_i)&=Art^{\theta}(L)(Art^{\theta}(L')(X_i))\\
&&=Art^{\theta}(L)(\exp(Y_i(L'))X_i\exp(Y_i(L'))^{-1}\\
&&=\exp(Art^{\theta}(L)(Y_i(L')))\exp(Y_i(L))X_i(\exp(Art^{\theta}(L)(Y_i(L')))\exp(Y_i(L)))^{-1}.
\end{eqnarray*}
By Balker-Campbell-Haussdorff formula, we have
\begin{eqnarray*}
&\log(\exp(Art^{\theta}(L)(Y_i(L')))\exp(Y_i(L)))&\equiv Art^{\theta}(L)(Y_i(L'))_{[k,2k[}+Y_i(L)_{[k,2k[} \mod \mathfrak{L}_{\geqslant 2k}\\
&&\equiv Y_i(L)_{[k,2k[}+Y_i(L')_{[k,2k[} \mod \mathfrak{L}_{\geqslant 2k}.
\end{eqnarray*}
Here, the last equality follows from the fact that $Art^{\theta}(L)$ acts on $\mathfrak{L}_{\geqslant k}/\mathfrak{L}_{2k}$ trivially and we denote by  $Y_i(L)_{[k,2k[}$ and  $Y_i(L')_{[k,2k[}$ the degree $[k,2k[$ truncation of $Y_i(L)$ and $Y_i(L')$. Hence, by definition of $\mu^{\theta}$, we have
\begin{eqnarray*}
&\mu_{[k,2k[}^{\theta}(L\circ L)&=\sum_{i=1}^{n}X_i\otimes \log(\exp(Art^{\theta}(L)(Y_i(L')))\exp(Y_i(L))) \mod \mathfrak{L}_{\geqslant 2k}\\
&&=\sum_{i=1}^n X_i \otimes (Y_i(L)_{[k,2k[}+Y_i(L')_{[k,2k[})\\
&&=\sum_{i=1}^n X_i \otimes Y_i(L)_{[k,2k[}+\sum_{i=1}^n X_i\otimes Y_i(L')_{[k,2k[}\\
&&=\mu_{[k,2k[}^{\theta}(L)+\mu_{k,2k[}^{\theta}(L').
\end{eqnarray*}
The kernel of $\mu_{[k,2k[}^{\theta}$ is clearly $SL_n(2k)$ from the definition of $\mu^{\theta}$. This completes the proof.
\end{proof}
From this proposition, we can deduce that the following algebraic structure of $SL_n$.
\begin{corollary}
The quotient monoid $SL_n(k)/SL_n(2k)$ is torsion-free abelian monoid.
\end{corollary}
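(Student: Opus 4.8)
The plan is to read the corollary off Proposition \ref{truncation}. Set
\[
V:=\bigoplus_{m=k}^{2k-1}H\otimes \mathfrak{L}_m ,
\]
a vector space over the characteristic-zero field $\mathbb{K}$. By Proposition \ref{truncation}, $\mu^{\theta}_{[k,2k[}\colon SL_n(k)\to V$ is a homomorphism of monoids into the additive group $(V,+)$ whose kernel is $SL_n(2k)$. By the first isomorphism theorem for monoids it therefore descends to an injective monoid homomorphism
\[
\overline{\mu^{\theta}_{[k,2k[}}\colon SL_n(k)/SL_n(2k)\hookrightarrow (V,+) ,
\]
so that $SL_n(k)/SL_n(2k)$ is isomorphic, as a monoid, to a submonoid of $(V,+)$. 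Here $SL_n(k)/SL_n(2k)$ is the quotient of $SL_n(k)$ by the kernel congruence $L\sim L'\Leftrightarrow \mu^{\theta}_{[k,2k[}(L)=\mu^{\theta}_{[k,2k[}(L')$ of $\mu^{\theta}_{[k,2k[}$, whose identity class equals $SL_n(2k)$ by Proposition \ref{truncation}; equivalently, since $\mu^{\theta}_{[k,2k[}$ is additive and kills $SL_n(2k)$, it is the quotient by the congruence generated by $L\sim Lg\sim gL$ for $g\in SL_n(2k)$.

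It remains only to observe that submonoids of the additive group of a $\mathbb{K}$-vector space are automatically torsion-free abelian. Indeed $(V,+)$ is abelian and, as $\mathrm{char}\,\mathbb{K}=0$, torsion-free; any submonoid $M\subseteq(V,+)$ is visibly commutative and cancellative, and if $ma=mb$ in $M$ for some integer $m\geqslant 1$ (the $m$-fold iterate of the monoid operation), then $m(a-b)=0$ in $V$ and hence $a=b$. Transporting this structure back along the isomorphism $SL_n(k)/SL_n(2k)\cong M$, where $M$ is the image of $\overline{\mu^{\theta}_{[k,2k[}}$, we conclude that $SL_n(k)/SL_n(2k)$ is a torsion-free abelian monoid.

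There is essentially no obstacle: the substance is entirely contained in Proposition \ref{truncation}, and the only point meriting care is the passage from ``$\mu^{\theta}_{[k,2k[}$ is a monoid homomorphism with kernel $SL_n(2k)$'' to ``$SL_n(k)/SL_n(2k)$ embeds into $V$'', i.e.\ the first isomorphism theorem for monoids together with the identification of the kernel congruence with the quotient by $SL_n(2k)$. (If one prefers to read $SL_n(k)/SL_n(2k)$ through the two-sided cosets of $SL_n(2k)$, one may additionally invoke the classical fact that $SL_n(k)/SL_n(2k)$ is a group --- an iterated central extension of the abelian groups $SL_n(j)/SL_n(j+1)$ for $k\leqslant j<2k$ --- so that cosets and kernel-congruence classes coincide; this is not needed for the statement as phrased.)
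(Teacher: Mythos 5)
Your proof is correct and follows exactly the route the paper intends: the corollary is derived directly from Proposition \ref{truncation}, by observing that $\mu^{\theta}_{[k,2k[}$ realizes $SL_n(k)/SL_n(2k)$ as a submonoid of the additive group of a $\mathbb{K}$-vector space in characteristic zero, which is automatically torsion-free abelian. Your extra care about what ``quotient'' and ``kernel'' mean for monoids (kernel congruence versus cosets of the submonoid $SL_n(2k)$) is a point the paper glosses over, and is a worthwhile clarification rather than a deviation.
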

By composing this proposition and Lemma \ref{D}, we have the following theorem.
\begin{theorem}
Notations being as above, for each $L \in SL_n(k)$, the degree $[k,2k[$ truncation $\mu^{\theta}_{[k,2k[}$ of $\mu^{\theta}$ is a monoid homomorphism and takes values in $\bigoplus_{m=k}^{2k-1} D_m(H)$. Hence, we have the following diagram valued monoid homomorphism
$$
\eta_{[k,2k[}^{-1}\circ \mu_{[k,2k[}^{\theta}=\bigoplus_{m=k}^{2k-1}\eta_m^{-1}\circ \mu_m^{\theta}: SL_n(k)\longrightarrow \bigoplus_{m=k}^{2k-1}\mathcal{C}_{m}^t(H).
$$

\end{theorem}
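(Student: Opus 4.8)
The plan is to deduce this statement by assembling the two results already in hand: Proposition~\ref{truncation}, which asserts that $\mu^{\theta}_{[k,2k[}$ is a monoid homomorphism on $SL_n(k)$ (with kernel $SL_n(2k)$), and Lemma~\ref{D}, which confines the total Milnor invariant of a special derivation to $D(H)=\mathrm{Ker}([-,-])$; the passage to Jacobi diagrams is then carried out simply by postcomposing with the inverse of the linear isomorphism $\eta_{[k,2k[}=\bigoplus_{m=k}^{2k-1}\eta_m$ recalled above. So the work here is essentially bookkeeping on top of those two facts.

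The first step is to check that $\mu^{\theta}_{[k,2k[}$ actually takes values in $\bigoplus_{m=k}^{2k-1}D_m(H)$, which is what makes the composition with $\eta_{[k,2k[}^{-1}$ legitimate. For $L\in SL_n(k)$ the automorphism $Art^{\theta}_k(L)$ belongs to $\mathrm{sAut}(\mathfrak{L}/\mathfrak{L}_{\geqslant k+1})$ by construction of the special Artin representation — here one uses the normalized condition $\theta(x_1\cdots x_n)=\exp(X_1+\cdots+X_n)$ of the special expansion $\theta$ — so its logarithm $\log(Art^{\theta}_k(L))$ is a special derivation. Applying Lemma~\ref{D} (or rather its evident counterpart for the nilpotent truncations $\mathfrak{L}/\mathfrak{L}_{\geqslant k+1}$, followed by passage to the inverse limit that defines $\mu^{\theta}$) gives $\mu(\log Art^{\theta}_k(L))\in D(H)$. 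Since the bracket $[-,-]:H\otimes\mathfrak{L}\to\mathfrak{L}$ is homogeneous, carrying $H\otimes\mathfrak{L}_m$ into $\mathfrak{L}_{m+1}$, its kernel is the (completed) direct sum $\bigoplus_{m\geqslant1}D_m(H)$; extracting the components of degree $m\in[k,2k[$ therefore yields $\mu^{\theta}_{[k,2k[}(L)\in\bigoplus_{m=k}^{2k-1}D_m(H)$.

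Combining this with Proposition~\ref{truncation}, the map
$$
\mu^{\theta}_{[k,2k[}: SL_n(k)\longrightarrow\bigoplus_{m=k}^{2k-1}D_m(H)
$$
is a monoid homomorphism. Finally, $\eta_m:\mathcal{C}^t_m(H)\xrightarrow{\ \simeq\ }D_m(H)$ is a $\mathbb{K}$-linear isomorphism for each $m$, so $\eta_{[k,2k[}^{-1}=\bigoplus_{m=k}^{2k-1}\eta_m^{-1}$ is in particular an isomorphism of the underlying additive groups; postcomposing the monoid homomorphism $\mu^{\theta}_{[k,2k[}$ with it again produces a monoid homomorphism $SL_n(k)\to\bigoplus_{m=k}^{2k-1}\mathcal{C}^t_m(H)$, which is the asserted diagram-valued invariant $\eta_{[k,2k[}^{-1}\circ\mu^{\theta}_{[k,2k[}=\bigoplus_{m=k}^{2k-1}\eta_m^{-1}\circ\mu_m^{\theta}$.

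Since the genuinely substantial points — the compatibility of the composition law of $SL_n$ with the Baker--Campbell--Hausdorff computation, and the speciality of $\log Art^{\theta}_k(L)$ forcing the bracket relation $\sum_i[X_i,Y_i]=0$ — have already been settled in Proposition~\ref{truncation} and Lemma~\ref{D}, there is no real obstacle to overcome. The only points that warrant a line of care are: verifying that the degree $[k,2k[$ truncation of $\mu^{\theta}$ used here is the one computed directly in the proof of Proposition~\ref{truncation} (so that the monoid-homomorphism property transfers verbatim), and checking that the grading conventions on $H\otimes\mathfrak{L}$ are precisely those under which $D_m(H)\subset H\otimes\mathfrak{L}_m$ occupies degree $m$; both are immediate once unwound.
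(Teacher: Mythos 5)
Your proposal is correct and follows essentially the same route as the paper, which simply states that the theorem is obtained ``by composing'' Proposition~\ref{truncation} (the monoid-homomorphism property of the truncation) with Lemma~\ref{D} (the containment of the image in $D(H)$, refined degreewise to $\bigoplus_{m=k}^{2k-1}D_m(H)$) and then applying the linear isomorphism $\eta_{[k,2k[}^{-1}$. Your write-up merely makes explicit the homogeneity of the bracket map and the transfer of the homomorphism property through the linear isomorphism, both of which the paper leaves implicit.
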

\section{The Infinitesimal Morita-Milnor homomorohism}
Morita introduced a refinement of the $k$-th Johnson homomorphism, called the {\it Morita homomorphism},  as a homomorphism from the $k$-th Johnson subgroup of the mapping class group of a surface to the 3rd homology group $H_3(F_n/\Gamma_{k+1}F_n)$ in \cite{M2}. And then, Massuyeau constructed its infinitesimal version, called the {\it infinitesimal Morita homomorphism}, as a homomorphism from the $k$-th Johnson subgroup to the 3rd homology group $H_3(\mathfrak{m}(F_n/\Gamma_{k+1}F_n))$ in \cite{MA}. In this section, we define the infinitesimal Morita-Milnor homomorphism which is a string analogue of infinitesimal Morita homomorphism.

\subsection{The infinitesimal Morita-Milnor homomorphism}

For each integer $k \geqslant 0$, we want to define a map
$$
M_{k+1}^{\theta}: SL_n(k+1) \longrightarrow H_3(\mathfrak{m}(F_n/\Gamma_{k+1}F_n)).
$$
For this purpose, we need the following  lemma: For any $l >m$, the canonical projection $t_{l.m}:\mathfrak{L}/\mathfrak{L}_{\geqslant l} \rightarrow \mathfrak{L}/\mathfrak{L}_{\geqslant m}$ induces the liner map ${t_{l,m}}_{\ast}: H_{\ast}(\mathfrak{L}/\mathfrak{L}_{\geqslant l}) \rightarrow H_{\ast}( \mathfrak{L}/\mathfrak{L}_{\geqslant m})$. Then we have
\begin{lemma}\label{lem:pre}$($\cite[Lemma 4.1]{MA}$)$ The linear map
$$
{t_{l,m}}_{\ast}: H_{2}( \mathfrak{L}/\mathfrak{L}_{\geqslant l+1}) \rightarrow H_{2}( \mathfrak{L}/\mathfrak{L}_{\geqslant m+1})
$$
is trivial for any $l > m$. Moreover, the linear map
$$
{t_{l,m}}_{\ast}: H_{3}( \mathfrak{L}/\mathfrak{L}_{\geqslant l+1}) \rightarrow H_{3}( \mathfrak{L}/\mathfrak{L}_{\geqslant m+1})
$$
is trivial for any $l \geqslant 2m$.
\end{lemma}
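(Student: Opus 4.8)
The plan is to reduce the whole lemma to a degree count. The projection $\mathfrak{L}/\mathfrak{L}_{\geqslant l+1}\to\mathfrak{L}/\mathfrak{L}_{\geqslant m+1}$ is a morphism of graded Lie algebras, so it induces a morphism of the associated graded Koszul complexes $\Lambda^\bullet$ and therefore a map on homology that preserves the internal grading (the same grading the excerpt uses to split $H_3(\mathfrak{L}/\mathfrak{L}_{\geqslant k})$ into its homogeneous pieces). Hence ${t_{l,m}}_\ast$ is the direct sum of its degree-$d$ components, and it suffices to show that under the stated hypotheses no internal degree $d$ occurs in both the source and the target.

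For the first assertion, I would compute $H_2$ of the truncations. Applying the Lie-algebra form of Hopf's formula to the free presentation $0\to\mathfrak{L}_{\geqslant j+1}\to\mathfrak{L}\to\mathfrak{L}/\mathfrak{L}_{\geqslant j+1}\to 0$, and using that $\mathfrak{L}$ is free (so $H_2(\mathfrak{L})=0$) together with the identity $[\mathfrak{L},\mathfrak{L}_{\geqslant j+1}]=\mathfrak{L}_{\geqslant j+2}$ valid in a free Lie algebra, one gets $H_2(\mathfrak{L}/\mathfrak{L}_{\geqslant j+1})\cong\mathfrak{L}_{\geqslant j+1}/\mathfrak{L}_{\geqslant j+2}=\mathfrak{L}_{j+1}$, concentrated in internal degree $j+1$. (The same can be extracted by an easy induction from the spectral sequence (\ref{spectral}).) For $l>m$ the degrees $l+1$ and $m+1$ are distinct, so the grading-preserving map ${t_{l,m}}_\ast$ between these one-degree spaces vanishes.

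For the second assertion, Theorem \ref{thm:Phi} (equivalently the commutative square in Theorem \ref{thm:IgusaOrr}) identifies $H_3(\mathfrak{L}/\mathfrak{L}_{\geqslant j+1})$ with $\bigoplus_{i=j+1}^{2j}\mathcal{C}^t_i(H)$ through $\Phi$, which raises degree by $1$; equivalently, with $\bigoplus_{i=j+1}^{2j}D_i(H)\subset H\otimes\mathfrak{L}$ through $\mathrm{IO}$, and $D_i(H)$ sits in internal degree $i+1$. Thus the source $H_3(\mathfrak{L}/\mathfrak{L}_{\geqslant l+1})$ occupies internal degrees $l+2,\dots,2l+1$ and the target $H_3(\mathfrak{L}/\mathfrak{L}_{\geqslant m+1})$ occupies $m+2,\dots,2m+1$. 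When $l\geqslant 2m$ one has $l+2\geqslant 2m+2>2m+1$, so these intervals are disjoint and the grading-preserving map ${t_{l,m}}_\ast$ is zero. Equivalently, under $\mathrm{IO}$ it becomes the projection $\bigoplus_{i=l+1}^{2l}D_i(H)\to\bigoplus_{i=m+1}^{2m}D_i(H)$, which can be nonzero only on a summand $D_i$ with $l+1\leqslant i\leqslant 2m$, an empty constraint when $l\geqslant 2m$.

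The only delicate part is bookkeeping: tracking the $\mathfrak{L}_{\geqslant k}$-versus-$\mathfrak{L}_{\geqslant k+1}$ indexing and the $+1$ degree shift of $\Phi$, and using that $\mathrm{IO}$ is natural with respect to the truncation projections --- which is precisely the commutative diagram of Theorem \ref{thm:IgusaOrr}, and is what licenses identifying ${t_{l,m}}_\ast$ on $H_3$ with the degree-supported map above. There is no analytic obstacle; once the identifications of Theorems \ref{thm:IgusaOrr}--\ref{thm:Phi} are imported, the statement is a disjointness of degree intervals.
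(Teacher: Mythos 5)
Your argument is correct. Note that the paper itself gives no proof of this lemma at all; it is imported as \cite[Lemma 4.1]{MA}, so there is no in-paper argument to compare against. Your degree-counting proof --- the truncation map preserves the internal grading of the Koszul complex, Hopf's formula places $H_2(\mathfrak{L}/\mathfrak{L}_{\geqslant j+1})\cong\mathfrak{L}_{j+1}$ in the single internal degree $j+1$, and the Igusa--Orr identification places $H_3(\mathfrak{L}/\mathfrak{L}_{\geqslant j+1})$ in degrees $j+2,\dots,2j+1$, so the source and target supports are disjoint exactly under the stated inequalities --- is the standard one and is essentially the proof given in the cited reference, so nothing further is needed.
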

Let $\theta$ be a special expansion of $F_n$. As in \S5.2, for any $L \in SL_n(k+1)$, we have $Art^{\theta}(L)=(\exp(Y_1(L)),\ldots, \exp(Y_n(L)))$. We denote by $Y_i(L)^{(l)}$ the degree $l$-part of the Lie series $Y_i(L)$, i.e. we have
$$
Y_i(L)=\sum_{l\geqslant k+1}^{\infty} Y_i(L)^{(l)}.
$$
We then set
$$
\sigma_{L}:=\sum_{i=1}^{n}\sum_{l=k}^{2k+1}X_{i} \wedge Y_{i}^{(l)}(L) \in \Lambda^2(\mathfrak{L}/\mathfrak{L}_{\geqslant 2k+2})
$$
whose image under the boundary operator $\partial_2$ is given by
$$
\partial_2 \sigma_{L}=\sum_{i=1}^n \sum_{l=k}^{2k+1}[X_{i}, Y_{i}^{(l)}(L)] \in \Lambda^1 (\mathfrak{L}/\mathfrak{L}_{\geqslant 2k+2})
$$
By Lemma \ref{D}, we have $\partial_2 \sigma_{L}=0$. Therefore, $\sigma_L$ is a 2-cycle.

By lemma \ref{lem:pre}, the reduction $\{ \sigma_{L}\} \in \Lambda^2(\mathfrak{L}/\mathfrak{L}_{\geqslant 2k+1})$ is a 2-cycle and so is a boundary. We choose $t_L \in \Lambda^3(\mathfrak{L}/\mathfrak{L}_{\geqslant 2k+1})$ such that $\partial_3 t_L=\{ \sigma_{L}\}$. Since the reduction $\{ \sigma_L\}=0 \in \Lambda^2(\mathfrak{L}/\mathfrak{L}_{\geqslant k+1})$, the reduction $\{ t_L\} \in \Lambda^3(\mathfrak{L}/\mathfrak{L}_{\geqslant k+1})$ is a 3-cycle. We then define
$$
\overline{M}_{k+1}^{\theta}(L):=[\{t_L\}] \in H_3(\mathfrak{L}/\mathfrak{L}_{\geqslant k+1}).
$$
We note that the isomorphism $\theta : \mathfrak{m}(F_n/\Gamma_{l+1}F_n) \longrightarrow \mathfrak{L}/\mathfrak{L}_{\geqslant l+1}$ is compatible with the canonical projection $\mathfrak{L}/\mathfrak{L}_{\geqslant l+1} \rightarrow \mathfrak{L}/\mathfrak{L}_{\geqslant m+1}$ and $\mathfrak{m}(F_n/\Gamma_{l+1}F_n) \rightarrow \mathfrak{m}(F_n/\Gamma_{m+1}F_n)$ for all $l \geqslant m$. We set $M_{k+1}: SL_n(k+1) \longrightarrow H_3(\mathfrak{m}(F_n/\Gamma_{k+1}F_n))$ by the composition $M_{k+1}^{\theta}:= \theta_{\ast}^{-1}\circ \overline{M}_{k+1}^{\theta}$ where $\theta_{\ast}:H_3(\mathfrak{m}(F_n/\Gamma_{k+1}F_n)) \rightarrow H_3(\mathfrak{L}/\mathfrak{L}_{\geqslant k+1})$ is isomorphism induced by $\theta$.

\begin{lemma}
For $L \in SL_n(k+1)$, the map
$$
M_{k+1}^{\theta}: SL_n(k+1) \longrightarrow H_3(\mathfrak{m}(F_n/\Gamma_{k+1}F_n))
$$
is a well-defined monoid homomorphism. 
\end{lemma}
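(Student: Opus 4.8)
The plan is to prove two things separately: (i) the class $\overline{M}_{k+1}^{\theta}(L)=[\{t_L\}]\in H_3(\mathfrak{L}/\mathfrak{L}_{\geqslant k+1})$ is independent of the choice of $t_L\in\Lambda^3(\mathfrak{L}/\mathfrak{L}_{\geqslant 2k+1})$ with $\partial_3 t_L=\{\sigma_L\}$, so that $M_{k+1}^{\theta}=\theta_{\ast}^{-1}\circ\overline{M}_{k+1}^{\theta}$ is a well-defined function; and (ii) it is additive and kills the trivial string link. Since $\theta_{\ast}$ on $H_3$ is a graded isomorphism (Remark \ref{rem:group}), it suffices to treat $\overline{M}_{k+1}^{\theta}$. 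Both parts are driven by Lemma \ref{lem:pre}; part (ii) also reuses the Baker--Campbell--Hausdorff bookkeeping from the proof of Proposition \ref{truncation}.

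For (i), observe first that once $\theta$ is fixed, the automorphism $Art^{\theta}(L)=(\exp Y_1(L),\dots,\exp Y_n(L))\in\mathrm{sAut}(\mathfrak{L})$ is determined by $L$, and with it each $Y_i(L)$ (by the normalization that the coefficient of $X_i$ in $Y_i(L)$ vanishes) and hence $\sigma_L$. If $t_L,t_L'$ both solve $\partial_3(-)=\{\sigma_L\}$ in $\Lambda^3(\mathfrak{L}/\mathfrak{L}_{\geqslant 2k+1})$, then $c:=t_L-t_L'$ is a $3$-cycle of the Koszul complex of $\mathfrak{L}/\mathfrak{L}_{\geqslant 2k+1}$ and so represents a class in $H_3(\mathfrak{L}/\mathfrak{L}_{\geqslant 2k+1})$. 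I would now apply Lemma \ref{lem:pre} to the transition map ${t_{l,m}}_{\ast}\colon H_3(\mathfrak{L}/\mathfrak{L}_{\geqslant 2k+1})\to H_3(\mathfrak{L}/\mathfrak{L}_{\geqslant k+1})$, which is the case $l=2k$, $m=k$ of that lemma; the hypothesis $l\geqslant 2m$ holds with equality, so the map is zero. Hence the reduction $\{c\}=\{t_L\}-\{t_L'\}$, a $3$-cycle of the Koszul complex of $\mathfrak{L}/\mathfrak{L}_{\geqslant k+1}$, is a boundary, and $[\{t_L\}]=[\{t_L'\}]$. (We do not assert independence of $\theta$, whence the superscript.) The trivial string link has $Y_i=0$, so $\sigma_L=0$, $t_L=0$ is admissible, and $\overline{M}_{k+1}^{\theta}=0$ there.

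For (ii), fix $L,L'\in SL_n(k+1)$, so $Y_i(L),Y_i(L')\in\mathfrak{L}_{\geqslant k+1}$. Exactly as in the proof of Proposition \ref{truncation},
\[
\exp\bigl(Y_i(L\circ L')\bigr)=\exp\bigl(Art^{\theta}(L)(Y_i(L'))\bigr)\exp\bigl(Y_i(L)\bigr).
\]
Since $L\in SL_n(k+1)$, $Art^{\theta}(L)$ acts trivially on $\mathfrak{L}_{\geqslant k+1}/\mathfrak{L}_{\geqslant 2k+2}$, so $Art^{\theta}(L)(Y_i(L'))\equiv Y_i(L')$ modulo $\mathfrak{L}_{\geqslant 2k+2}$; and since every bracketing of two elements of $\mathfrak{L}_{\geqslant k+1}$ lies in $\mathfrak{L}_{\geqslant 2k+2}$, Baker--Campbell--Hausdorff gives $Y_i(L\circ L')\equiv Y_i(L)+Y_i(L')$ modulo $\mathfrak{L}_{\geqslant 2k+2}$. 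Comparing degree-$l$ components for $l\leqslant 2k+1$ yields $Y_i^{(l)}(L\circ L')=Y_i^{(l)}(L)+Y_i^{(l)}(L')$, hence $\sigma_{L\circ L'}=\sigma_L+\sigma_{L'}$ in $\Lambda^2(\mathfrak{L}/\mathfrak{L}_{\geqslant 2k+2})$ and likewise after reducing to $\Lambda^2(\mathfrak{L}/\mathfrak{L}_{\geqslant 2k+1})$. Taking $t_L,t_{L'}$ as in the construction, the sum $t_L+t_{L'}$ is a legitimate choice of $t_{L\circ L'}$, so by (i) $\overline{M}_{k+1}^{\theta}(L\circ L')=[\{t_L+t_{L'}\}]=\overline{M}_{k+1}^{\theta}(L)+\overline{M}_{k+1}^{\theta}(L')$; applying $\theta_{\ast}^{-1}$ finishes (ii).

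The main obstacle is the degree bookkeeping in (i): $t_L$ is determined only modulo $\Lambda^3(\mathfrak{L}/\mathfrak{L}_{\geqslant 2k+1})$ whereas the invariant is to live in $H_3(\mathfrak{L}/\mathfrak{L}_{\geqslant k+1})$, and it is precisely the borderline case $l=2m$ of the vanishing range in Lemma \ref{lem:pre} that makes this ambiguity disappear, so $k+1$ is the largest nilpotency class for which the construction works. Along the way one should double-check the points already implicit in the text: that $t_L$ exists, which uses the $H_2$-part of Lemma \ref{lem:pre} with $l=2k+1>2k=m$; that $\{\sigma_L\}$ reduces to $0$ in $\Lambda^2(\mathfrak{L}/\mathfrak{L}_{\geqslant k+1})$ (because every $Y_i^{(l)}(L)$ appearing has $l\geqslant k+1$), so that $\{t_L\}$ is genuinely a $3$-cycle of the Koszul complex of $\mathfrak{L}/\mathfrak{L}_{\geqslant k+1}$; and that $\partial_2\sigma_L=0$, which is Lemma \ref{D} applied degreewise.
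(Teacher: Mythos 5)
Your proposal is correct and follows essentially the same route as the paper: well-definedness via the vanishing of the transition map on $H_3$ from Lemma \ref{lem:pre} applied to the difference of two choices of $t_L$, and additivity via the Baker--Campbell--Hausdorff computation recycled from Proposition \ref{truncation} to get $\sigma_{L\circ L'}=\sigma_L+\sigma_{L'}$ and hence $t_L+t_{L'}$ as an admissible choice of $t_{L\circ L'}$. Your explicit verification that the borderline case $l=2m$ of Lemma \ref{lem:pre} is exactly what is needed, and that the trivial string link maps to zero, are small useful additions but not a different argument.
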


\begin{proof}
It is sufficient to show that $\overline{M}_{k+1}^{\theta}$ is a well-defined monoid homomorphism. 

To begin with, we show that $\overline{\mu}_{k+1}^{\theta}$ is independent of a choice of $t_L \in \Lambda^3 (\mathfrak{L}/\mathfrak{L}_{\geqslant 2k+1})$. Let $t'_L \in \Lambda^3(\mathfrak{L}/\mathfrak{L}_{\geqslant 2k+1})$ be another element such that $\partial_3 t'_L=\{\sigma_L\}$. Then the difference $t_L-t'_L$ is a 3-cycle. Then, by Lemma \ref{lem:pre}, the reduction $\{t_L-t'_L\}=\{t_L\}-\{t'_L\}$ must be null-homologous. Hence, we conclude that $[\{t_L\}]=[\{t'_L\}] \in H_3(\mathfrak{L}/\mathfrak{L}_{\geqslant k+1})$. Therefore, $\overline{M}_{k+1}^{\theta}$ is well-defined.

Secondly, we prove that $\overline{M}_{k+1}^{\theta}$ is a monoid homomorphism. Take another $L' \in SL_n(k+1)$ and choose $t_L' \in \Lambda^3(\mathfrak{L}/\mathfrak{L}_{\geqslant 2k+1})$ such that $\partial_3(t_L')=\{\sigma_{L'}\}$. Here, $\{\sigma_{L'}\}=\sum_{i=1}^n\sum_{l=k+1}^{2k}X_{i} \wedge {Y}_{i}^{(l)}(L')$. Then we set $\overline{\mu}_{k+1}^{\theta}(L')=[\{ t_{L'}\}]$. 

As in the proof of Proposition \ref{truncation}, we have 
$$
\log(\exp(Art^{\theta}(L)(Y_i(L')))\exp(Y_i(L))\equiv \sum_{l=k}^{2k}Y_i^{(l)}(L)+Y_i^{(l)}(L') \mod \mathfrak{L}_{2k+1}
$$

Hence, we see that the reduction $\{\sigma_{LL'}\}$ is given by
\begin{eqnarray*}
&\{\sigma_{LL'}\}&=\sum_{i=1}^n\sum_{l=k}^{2k}X_{i }\wedge (Y_{i}^{(l)}(L)+{Y}_{i}^{(l)}(L')) \\
&&=\{\sigma_L\}+\{\sigma_{L'}\} \in \Lambda^2(\mathfrak{L}/\mathfrak{L}_{\geqslant 2k+1})
\end{eqnarray*}
By choosing $t_{LL'} \in \Lambda^3(\mathfrak{L}/\mathfrak{L}_{\geqslant 2k+1})$ so that $\partial_3(t_{LL'})=\{\sigma_{LL'}\}=\{\sigma_L\}+\{\sigma_{L'}\} $, we have $\overline{M}_{k}^{\theta}(L\circ L')=[\{ t_{LL'}\}]$. Hence, 
\begin{eqnarray*}
\begin{split}
\overline{M}_{k+1}^{\theta}(L\circ L')&=[\{ t_{LL'}\}]\\
&=[\{ t_L\} + \{ t_{L'}\}]\\
&=\overline{M}_{k+1}^{\theta}(L)+\overline{M}_{k+1}^{\theta}(L').
\end{split}
\end{eqnarray*}
Therefore, we conclude that $\overline{M}_{k+1}^{\theta}$ is a monoid homomorphism and $M_{k+1}^{\theta}$ is also a monoid homomorphism.
\end{proof}
\begin{definition}
Let $\theta$ be a special expansion. For $L \in SL(k+1)$, the homomorphism
$$
M_{k+1}^{\theta}:SL_n(k+1)\longrightarrow H_3(\mathfrak{m}(F_n)/\Gamma_{k+1}\mathfrak{m}(F_n))
$$
is called the {\it infinitesimal Morita-Milnor homomorphism}.
\end{definition}
\begin{theorem}\label{thm:com}
Notations being as above, we have the following commutative diagram:
$$
\xymatrix{
SL_n(k+1) \ar[d]_{\mu_{[k+1,2k+1[}^{\theta}}  \ar[r]^{M_{k+1}^{\theta}} & H_3(\mathfrak{m}(F_n/\Gamma_{k+1}F_n)) \ar[r]^{\theta_{\ast}}_{\simeq} & H_3(\mathfrak{L}/\mathfrak{L}_{\geqslant k+1})\\
\bigoplus_{l=k+1}^{2k} \mathrm{D}_{l}(H) & & \ar[ll]^{\simeq}_{\eta} \bigoplus_{l=k+1}^{2k}\mathcal{C}^t_{l}(H) \ar[u]_{\Phi}^{\simeq}
}
$$
\end{theorem}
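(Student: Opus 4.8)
The plan is to reduce the commutativity to a single computation in the Koszul complex of $\mathfrak{L}$. From $L\in SL_n(k+1)$ I would manufacture one element $\tau_L\in\Lambda^3\mathfrak{L}$, built out of the tree diagrams $\eta_l^{-1}(\mu_l^\theta(L))$, and show that its reduction modulo $\mathfrak{L}_{\geqslant k+1}$ simultaneously represents $\theta_\ast\bigl(M_{k+1}^\theta(L)\bigr)$ and $\Phi\bigl(\eta^{-1}(\mu_{[k+1,2k+1[}^\theta(L))\bigr)$. Since $\eta$, $\Phi$, $\theta_\ast$ are isomorphisms, the diagram commutes exactly when $\theta_\ast\circ M_{k+1}^\theta=\Phi\circ\eta^{-1}\circ\mu_{[k+1,2k+1[}^\theta$, and this is precisely what the two readings of $[\tau_L\bmod\mathfrak{L}_{\geqslant k+1}]$ yield. (I may assume $k\geqslant 1$; for $k=0$ the left-hand column is zero.)

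First I would unwind the right-hand path. For $k+1\leqslant l\leqslant 2k$ set $T_l:=\eta_l^{-1}(\mu_l^\theta(L))\in\mathcal{C}^t_l(H)$, which is legitimate since $\mu_l^\theta(L)=\sum_i X_i\otimes Y_i^{(l)}(L)\in D_l(H)$ by Lemma \ref{D}, and put $\tau_L:=\sum_{l=k+1}^{2k}\phi(T_l)$. Then $\eta^{-1}(\mu_{[k+1,2k+1[}^\theta(L))=\sum_{l=k+1}^{2k}T_l$, and from the identity $\partial_3\phi(T)=\sum_v\col(v)\wedge\comm(T_v)$ together with the definition of $T_l$ one gets
$$
\partial_3\tau_L=\sum_{l=k+1}^{2k}\sum_{i=1}^n X_i\wedge Y_i^{(l)}(L)\in\Lambda^2\mathfrak{L},
$$
using the canonical identification $\mathfrak{L}_1\otimes\mathfrak{L}_l\cong\mathfrak{L}_1\wedge\mathfrak{L}_l$ available because $l\geqslant 2$. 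As each $\comm((T_l)_v)$ has degree $l\geqslant k+1$, this boundary vanishes modulo $\mathfrak{L}_{\geqslant k+1}$, so $\tau_L\bmod\mathfrak{L}_{\geqslant k+1}$ is a $3$-cycle and, by the construction of $\Phi$ from the fission map (Theorem \ref{thm:Phi}), $\Phi\bigl(\eta^{-1}(\mu_{[k+1,2k+1[}^\theta(L))\bigr)=[\tau_L\bmod\mathfrak{L}_{\geqslant k+1}]$ in $H_3(\mathfrak{L}/\mathfrak{L}_{\geqslant k+1})$.

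Next I would match this with $\overline{M}_{k+1}^\theta(L)=\theta_\ast(M_{k+1}^\theta(L))$. Reducing the displayed expression for $\partial_3\tau_L$ modulo $\mathfrak{L}_{\geqslant 2k+1}$ loses nothing (every $Y_i^{(l)}(L)$ with $l\leqslant 2k$ survives), while the top term $Y_i^{(2k+1)}(L)$ appearing in $\sigma_L$ does die there; comparing with the definition of $\sigma_L$, this shows $\partial_3(\tau_L\bmod\mathfrak{L}_{\geqslant 2k+1})=\{\sigma_L\}$. Hence $\tau_L\bmod\mathfrak{L}_{\geqslant 2k+1}$ is an admissible choice for the element $t_L$ used to define $\overline{M}_{k+1}^\theta(L)$; since that definition is independent of the choice of $t_L$ (the preceding lemma, which rests on Lemma \ref{lem:pre}), I take $t_L$ to be it, and then $\overline{M}_{k+1}^\theta(L)=[t_L\bmod\mathfrak{L}_{\geqslant k+1}]=[\tau_L\bmod\mathfrak{L}_{\geqslant k+1}]$. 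Together with the previous paragraph this gives $\theta_\ast(M_{k+1}^\theta(L))=\Phi\bigl(\eta^{-1}(\mu_{[k+1,2k+1[}^\theta(L))\bigr)$, which is the asserted commutativity.

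The real content, and the step I expect to be the main obstacle, is the bookkeeping across the three truncations $\mathfrak{L}_{\geqslant 2k+2}\supset\mathfrak{L}_{\geqslant 2k+1}\supset\mathfrak{L}_{\geqslant k+1}$: one has to check, with matching sign conventions for $\partial_2,\partial_3$ and for the identification $\mathfrak{L}_1\otimes\mathfrak{L}_l\cong\mathfrak{L}_1\wedge\mathfrak{L}_l$, that (i) the top component $Y_i^{(2k+1)}(L)$ entering $\sigma_L$ is irrelevant after the first reduction, (ii) $\partial_3\tau_L$ reduces to $\{\sigma_L\}$ exactly, and (iii) the well-definedness lemma for $\overline{M}_{k+1}^\theta$ is precisely what licenses the choice $t_L=\tau_L\bmod\mathfrak{L}_{\geqslant 2k+1}$. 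The fission identity and the description of $\Phi$ are taken from \cite{MA}; granting those, the remainder is substitution of definitions.
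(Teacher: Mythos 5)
Your proposal is correct and follows essentially the same route as the paper: both take the tree combination $b_L=\eta^{-1}(\mu^{\theta}_{[k+1,2k+1[}(L))$ (your $\sum_l T_l$), apply the fission map, use the identity $\partial_3\phi(T)=\sum_v\col(v)\wedge\comm(T_v)$ to identify the boundary with $\{\sigma_L\}$, and then invoke the independence of the choice of $t_L$ to conclude $\overline{M}_{k+1}^{\theta}(L)=[\{\phi(b_L)\}]=\Phi(b_L)$. Your explicit attention to the truncation levels and to the disappearance of the top-degree term of $\sigma_L$ is a welcome clarification of a point the paper passes over quickly.
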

\begin{proof}
For $L \in SL_n(k+1)$, we want to show that $\overline{M}_{k+1}^{\theta}(L)=\Phi\circ \eta^{-1}\circ \mu_{[k,2k+1[}^{\theta}(L)$.
By the definition of the map $\mu_{[k+1,2k+1[}^{\theta}(L)$, we have
$$
\mu_{[k+1,2k+1[}^{\theta}(L)=\sum_{i=1}^{n}\sum_{l=k+1}^{2k}\left( X_{i} \otimes Y_{i}^{(l)}(L)\right).
$$
By the isomorphism $\eta:= \bigoplus_{l=k+1}^{2k} \eta_{l}: \bigoplus_{l=k+1}^{2k} \mathcal{C}^t_{l}(H) \overset{\simeq}{\longrightarrow} \bigoplus_{l=k+1}^{2k} \mathrm{D}_{l} (H) \subset \bigoplus_{l=k+1}^{2k} H\otimes \mathfrak{L}_{l}(H)$ we set $b_L:=\eta^{-1}\mu_{[k+1,2k+1[}^{\theta}(L)$ which is the diagrammatic description of $\mu_{[k+1,2k+1[}^{\theta}$. 

Let  $\phi(b_L) \in \Lambda^3(\mathfrak{L}/\mathfrak{L}_{\geqslant 2k+1})$ is the image under fission map $\phi$ of the linear combination of trees $b_L$. Then, it is enough to show that $\partial_3 \phi(b_L)=\{\sigma_L\}$. Here, $\partial_3 \phi(b_L)$ is given by
\begin{equation}\label{partial}
\partial_3\phi (b_L)=\sum_{v} \mathrm{col}(v) \wedge \mathrm{comm}(T_v)
\end{equation}
where the sum is taken over all univalent vertices $v$ of $b_L$ with $\mathrm{col}(v)$. Let us consider the natural embedding 
$$
\gamma: \bigoplus_{l=k}^{2k} H \otimes \mathfrak{L}_{l} \longrightarrow \Lambda^2(\mathfrak{L}/\mathfrak{L}_{\geqslant 2k+1}) ; u \otimes v \longmapsto \{u\} \wedge \{v\}.
$$
By the definition, we have
\begin{eqnarray}\label{gamma}
\begin{split}
\gamma\eta(b_L)&=\gamma\left(\sum_{v} \mathrm{col}(v) \otimes \mathrm{comm}(T_v)\right)\\
&=\sum_{v} \mathrm{col}(v) \wedge \mathrm{comm}(T_v)
\end{split}
\end{eqnarray}
where the range of sum is same as the above.

Hence, one sees that $\partial_3 \phi(b_L)=\gamma \eta(b_L)$ by (\ref{partial}) and (\ref{gamma}). 

Then, we have
\begin{eqnarray*}
\begin{split}
\partial_3 \phi(b_L)&=\gamma \eta(b_L)\\
&=\gamma\eta(\eta^{-1}\nu_{[k+1,2k+1[}^{\theta}(L))\\
&=\gamma\nu_{[k+1,2k+1[}^{\theta}(L)\\
&=\sum_{i=1}^n\sum_{l=k+1}^{2k}X_{i}\wedge Y_{i}^{(l)}(L)\\
&=\{\sigma_L\}.
\end{split}
\end{eqnarray*}
Therefore, we have $\overline{M}_k^{\theta}(L)=[\{\phi(b_L)\}]$. This completes the proof.
\end{proof}

\begin{corollary}
Notations being as above, we have the following commutative diagram:
$$
\xymatrix{
SL_n(k+1) \ar[rd]_{\mu_{k+1}}\ar[r]^{\mu_{k+1}^{\theta}} & H_3(\mathfrak{m}(F_n/\Gamma_{k+1}F_n)) \ar[d]_{d_{3,0}^2}\\
& H\otimes \mathfrak{L}_{k+1}(H)\\
}
$$
Here the homomorphism $d_{3,0}^2$ is the differential of the Hochschild-Serre spectral sequence associated to the central extension
$$
0\longrightarrow  \mathfrak{m}(\Gamma_{k}F_n/\Gamma_{k+1}F_n)  \longrightarrow \mathfrak{m}(F_n/\Gamma_{k+1}F_n)\longrightarrow  \mathfrak{m}(F_n/\Gamma_k F_n)\longrightarrow 1
$$
\end{corollary}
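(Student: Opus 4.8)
The plan is to obtain this diagram by composing Theorem~\ref{thm:com} with Theorem~\ref{thm:IgusaOrr} and then applying the extension of Theorem~\ref{Milnor} to $SL_n$. (Here the horizontal map is the infinitesimal Morita--Milnor homomorphism $M_{k+1}^{\theta}$, and the relevant central extension is the one with $\Gamma_{k+1}$ and $\Gamma_{k+2}$, so that $d_{3,0}^2$ has codomain $H\otimes\mathfrak{L}_{k+1}(H)$.) Since a special expansion $\theta$ identifies the central extension $0\to\mathfrak{m}(\Gamma_{k+1}F_n/\Gamma_{k+2}F_n)\to\mathfrak{m}(F_n/\Gamma_{k+2}F_n)\to\mathfrak{m}(F_n/\Gamma_{k+1}F_n)\to1$ with $0\to\mathfrak{L}_{k+1}\to\mathfrak{L}/\mathfrak{L}_{\geqslant k+2}\to\mathfrak{L}/\mathfrak{L}_{\geqslant k+1}\to1$, it induces an isomorphism of the two Hochschild--Serre spectral sequences, so $\theta_{\ast}$ intertwines the two copies of $d_{3,0}^2$; it therefore suffices to prove $d_{3,0}^2\circ\overline{M}_{k+1}^{\theta}=\mu_{k+1}^{\theta}$ on the $\widehat{\mathfrak{L}}$-side, where now $d_{3,0}^2\colon H_3(\mathfrak{L}/\mathfrak{L}_{\geqslant k+1})\to H\otimes\mathfrak{L}_{k+1}(H)$ is the differential $E_{3,0}^2\to E_{1,1}^2$ of the spectral sequence~(\ref{spectral}) attached to the latter extension.

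First I would recall from Theorem~\ref{thm:com} that for $L\in SL_n(k+1)$ one has $\overline{M}_{k+1}^{\theta}(L)=\Phi(b_L)$ with $b_L=\eta^{-1}\mu_{[k+1,2k+1[}^{\theta}(L)=\sum_{l=k+1}^{2k}\eta_l^{-1}\mu_l^{\theta}(L)$. The target $E_{1,1}^2\cong H_1(\mathfrak{L}/\mathfrak{L}_{\geqslant k+1})\otimes\Lambda^1\mathfrak{L}_{k+1}\cong H\otimes\mathfrak{L}_{k+1}(H)$ is concentrated in Koszul degree $k+2$, and $\Phi$ raises degree by $1$, so the degree-preserving map $d_{3,0}^2$ detects only the degree-$(k+2)$ summand $\Phi\bigl(\eta_{k+1}^{-1}\mu_{k+1}^{\theta}(L)\bigr)$ of $\Phi(b_L)$. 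Applying the final clause of Theorem~\ref{thm:IgusaOrr} with $k$ replaced by $k+1$ --- which expresses $d_{3,0}^2$ as $\mathrm{IO}_{k+2}$ precomposed with the projection of $H_3(\mathfrak{L}/\mathfrak{L}_{\geqslant k+1})$ onto its degree-$(k+2)$ part --- together with the compatibility $\mathrm{IO}_{k+2}\circ\Phi|_{\mathcal{C}^t_{k+1}(H)}=\eta_{k+1}$, yields $d_{3,0}^2\bigl(\overline{M}_{k+1}^{\theta}(L)\bigr)=\eta_{k+1}\bigl(\eta_{k+1}^{-1}\mu_{k+1}^{\theta}(L)\bigr)=\mu_{k+1}^{\theta}(L)$. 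This compatibility is implicit in Theorem~\ref{thm:Phi}; alternatively it can be read off from the chain-level computation already in the proof of Theorem~\ref{thm:com}, where the fission chain $\phi(b_L)$ serves as a lift of the $3$-cycle representing $\overline{M}_{k+1}^{\theta}(L)$ to $\Lambda^3(\mathfrak{L}/\mathfrak{L}_{\geqslant k+2})$ and $\partial_3\phi(b_L)$ reduces modulo $\mathfrak{L}_{\geqslant k+2}$ to $\sum_i X_i\wedge Y_i^{(k+1)}(L)\in\mathfrak{L}_1\wedge\mathfrak{L}_{k+1}$, which is precisely $\mu_{k+1}^{\theta}(L)$ under the above identification.

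Finally, the $SL_n$ form of Theorem~\ref{Milnor} gives $\mu_{k+1}^{\theta}|_{SL_n(k+1)}=\mu_{k+1}$, so transporting back along $\theta_{\ast}$ gives $d_{3,0}^2\circ M_{k+1}^{\theta}=\mu_{k+1}$, which is the stated commutativity. I expect the one genuine difficulty to be purely bookkeeping: reconciling the $+1$ degree shift of $\Phi$ with the shift from the $\mathfrak{L}/\mathfrak{L}_{\geqslant k}$-normalisation of Section~2 to the $\mathfrak{L}/\mathfrak{L}_{\geqslant k+1}$-normalisation used here, and fixing the isomorphism $E_{1,1}^2\cong H\otimes\mathfrak{L}_{k+1}(H)$ so that $X_i\wedge Y_i^{(k+1)}(L)$ is carried to $X_i\otimes Y_i^{(k+1)}(L)$, so that no sign discrepancy creeps in between $d_{3,0}^2\circ M_{k+1}^{\theta}$ and $\mu_{k+1}$.
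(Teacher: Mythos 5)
Your proposal is correct and follows essentially the same route as the paper: reduce via $\theta_{\ast}$ and naturality of the Hochschild--Serre spectral sequence to the $\widehat{\mathfrak{L}}$-side, identify $d_{3,0}^{2}$ with $\eta_{k+1}\circ\Phi^{-1}$ (projected to the relevant degree) using Theorems \ref{thm:IgusaOrr} and \ref{thm:Phi}, invoke Theorem \ref{thm:com} to get $d_{3,0}^{2}\circ\overline{M}_{k+1}^{\theta}=\mu_{k+1}^{\theta}$, and finish with the $SL_n$ form of Theorem \ref{Milnor}. Your explicit correction of the indices in the central extension (it should involve $\Gamma_{k+1}$ and $\Gamma_{k+2}$ so that the kernel is $\mathfrak{L}_{k+1}$ and $d_{3,0}^{2}$ lands in $H\otimes\mathfrak{L}_{k+1}(H)$) and your degree bookkeeping for $\Phi$ are more careful than the paper's own writeup, which leaves these shifts implicit.
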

\begin{proof}

Let $\theta$ be a normalized expansion of $F_n$. Noting the Remark 3.2.1 (2) and (3), $\theta$ induces the isomorphism 
$$
\theta: \mathfrak{m}(F_n/\Gamma_{k+1}F_n)\longrightarrow \mathfrak{L}/\mathfrak{L}_{\geqslant k+1}
$$
and so  we have the following commutative diagram:

\begin{equation}\label{c}
\xymatrix{
0\ar[r] & \mathfrak{L}_k(H)  \ar@{=}[d] \ar[r] &\mathfrak{m}(F_n/\Gamma_{k+1}F_n)\ar[d]^{\theta}_{\simeq} \ar[r] & \mathfrak{m}(F_n/\Gamma_k F_n) \ar[d]^{\theta}_{\simeq}\ar[r] &1\\
0 \ar[r] & \mathfrak{L}_k(H) \ar[r] & \mathfrak{L}/\mathfrak{L}_{\geqslant k+1} \ar[r] & \mathfrak{L}/\mathfrak{L}_{\geqslant k} \ar[r] &1 
}
\end{equation}
By the naturality of Hochschild-Serre spectral sequence, it suffices to prove that the following diagram commutes:
$$
\xymatrix{
SL_n(k+1)\ar[rd]_{\mu_{k+1}} \ar[r]^{\overline{M}_{k+1}^{\theta}} & H_3(\mathfrak{L}/\mathfrak{L}_{\geqslant k+1}) \ar[d]^{d_{3,0}^2} \\
 & H\otimes \mathfrak{L}_{k+1}(H)
}
$$
where the differential $d_{3,0}^2$ is the differential of the Hochschild-Serre spectral sequence associated to the central extension of second line of (\ref{c}). By theorem \ref{thm:IgusaOrr} and theorem \ref{thm:Phi}, we have $d_{3,0}^2 \circ \overline{M}_{k+1}^{\theta}=\eta_{k+1}\circ \Phi^{-1}\circ \overline{M}_{k+1}^{\theta}$. And we have $\eta_{k+1}\circ \Phi^{-1}\circ \overline{M}_{k+1}^{\theta}=\mu_{k+1}^{\theta}$ by theorem \ref{thm:com}. Moreover, since $\mu_{k+1}^{\theta}$ is coincides with the Milnor invariant of degree $k+1$ by Theorem 5.2.2, we conclude that 
$$
d_{3,0}^2 \circ \overline{M}_{k+1}^{\theta}=\eta_{k+1}\circ \Phi^{-1}\circ \overline{M}_{k+1}^{\theta}=M_{k+1}^{\theta}=\mu_{k+1}.
$$
\end{proof}
Since the kernel of the truncation $\mu_{[k+1,2k+1[}$ is clearly $SL(2k+1)$, the following immediately follows.
\begin{corollary}
The kernel of $M_{k+1}^{\theta}$ is $SL_n(2k+1)$.
\end{corollary}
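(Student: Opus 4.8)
The plan is to read the kernel directly off the commutative square of Theorem~\ref{thm:com}. In that square the three maps $\theta_{\ast}$, $\Phi$ and $\eta$ are isomorphisms, so it asserts precisely that $M_{k+1}^{\theta}$ and the truncated total Milnor invariant $\mu_{[k+1,2k+1[}^{\theta}$ agree up to post-composition with an isomorphism. Hence $\Ker M_{k+1}^{\theta}=\Ker\bigl(\mu_{[k+1,2k+1[}^{\theta}\bigr)$ as submonoids of $SL_n(k+1)$, and everything reduces to computing this last kernel, which I would do exactly as in the final line of the proof of Proposition~\ref{truncation}.

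For $L\in SL_n(k+1)$, write $Y_i(L)=\sum_{l\geqslant k+1}Y_i^{(l)}(L)$ for the homogeneous decomposition of the Lie series attached to $Art^{\theta}(L)$, so that
\[
\mu_{[k+1,2k+1[}^{\theta}(L)=\sum_{i=1}^{n}\sum_{l=k+1}^{2k}X_i\otimes Y_i^{(l)}(L).
\]
Since $X_1,\dots,X_n$ are linearly independent in $H$, this vanishes if and only if $Y_i^{(l)}(L)=0$ for every $i$ and every $l$ with $k+1\leqslant l\leqslant 2k$; as $L\in SL_n(k+1)$ already forces $Y_i(L)$ to start in degree $\geqslant k+1$, this is equivalent to $Y_i(L)$ having lowest degree $\geqslant 2k+1$ for all $i$. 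Because $\theta$ is a Magnus expansion (and conjugation by the group-like $U_i$ does not change lowest degrees), this last condition is equivalent to $y_i(L)\in\Gamma_{2k+1}F_n$ for all $i$, which is exactly $L\in SL_n(2k+1)$. Together with the first paragraph this gives $\Ker M_{k+1}^{\theta}=SL_n(2k+1)$.

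I do not expect a genuine obstacle here, since the substance is already encoded in Theorem~\ref{thm:com}; the only delicate point is keeping the degree ranges straight — the truncation $\mu_{[k+1,2k+1[}^{\theta}$ covers only the degrees $k+1,\dots,2k$, and it is precisely the fact that these are exactly the degrees lying strictly below $2k+1$ but at least $k+1$ that makes its vanishing push the longitudes all the way from $\Gamma_{k+1}F_n$ into $\Gamma_{2k+1}F_n$ rather than merely into $\Gamma_{2k}F_n$.
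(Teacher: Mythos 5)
Your proposal is correct and follows essentially the same route as the paper: the paper likewise deduces the corollary immediately from the commutative diagram of Theorem~\ref{thm:com} together with the observation (stated without further detail there) that the kernel of the truncation $\mu_{[k+1,2k+1[}^{\theta}$ is $SL_n(2k+1)$, which you justify by the same degree-counting argument as in Proposition~\ref{truncation}. Your added explanation of why vanishing of the degrees $k+1,\dots,2k$ pushes the longitudes into $\Gamma_{2k+1}F_n$ is a correct filling-in of the step the paper calls ``clear.''
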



\subsection*{Acknowledgement}
The author would be sincerely grateful to  Gw\'ena\"el Massuyeau for reading draft of this paper and giving the author helpful comment and encouragement. The author would also be sincerely grateful to Hidekazu Furusho,Toshie Takata,  Kazuo Habiro for reading the draft and giving the author helpful comments. The author is partly supported by JSPS Fellows (14J12303).


\end{document}